\newcounter{FNC}[page]
\def\fauxfootnote#1{{\addtocounter{FNC}{2}$^\fnsymbol{FNC}$%
     \let\thefootnote\relax\footnotetext{$^\fnsymbol{FNC}$\Magenta{#1}}}}
\numberwithin{equation}{section}
\newtheorem{theorem}{Theorem}[section]
\newtheorem{lemma}[theorem]{Lemma}
\newtheorem{prop}[theorem]{Proposition}
\newtheorem{defi}[theorem]{Definition}
\newtheorem{exam}[theorem]{Example}
\newtheorem{rema}[theorem]{Remark}
\def\R{\mathbb R}
\def\p{\mathcal P}
\def\KP{{\mathcal K}{\mathcal P}}
\author{Stefan Forcey} \address[S. Forcey]{
    Department of Mathematics\\
    The University of Akron\\
    Akron, OH 44325-4002\\
Financial support for this research was
received from the\\
 Faculty Research Committee of The University of Akron
    }
    \email{sf34@uakron.edu}  \urladdr{http://www.math.uakron.edu/\~{}sf34/}
\author{Logan Keefe} \address[L. Keefe]{
    Department of Mathematics\\
    The University of Akron\\
    Akron, OH 44325-4002
    }
\author{William Sands} \address[W. Sands]{
    Department of Mathematics\\
    The University of Akron\\
    Akron, OH 44325-4002
    }
\title[BME and $\KP$ Facets]{Split-facets for Balanced Minimal Evolution Polytopes and the Permutoassociahedron.}
\keywords{phylogenetics, polytope, neighbor joining, facets}
\subjclass[2000]{90C05, 52B11, 92D15}
\begin{document}

\begin{abstract}

Understanding the face structure of the balanced minimal evolution
(BME) polytope, especially its top-dimensional facets, is a
fundamental problem in phylogenetic theory.
 We show that BME polytope has a sub-lattice of its poset of faces which is isomorphic to a quotient of the well-studied
 permutoassociahedron. This sub-lattice corresponds to compatible sets of splits displayed by phylogenetic trees, and
 extends the lattice of faces of the BME polytope found by Hodge, Haws, and Yoshida.
  Each of the maximal elements in our new poset of faces corresponds to a single split of the leaves. Nearly all of these turn out to actually be facets of the BME polytope, a collection of facets which grows exponentially.
\end{abstract}

\maketitle

\section{Introduction}

\emph{Phylogenetics} is the study of the reconstruction of
biological family trees from genetic data. Results from
phylogenetics can inform every facet of modern biology, from natural
history to medicine. A chief goal of biological research is to find
relationships between genes and the functional structures of
organisms.  Knowing degrees of kinship can allow us to decide
whether or not an adaptation in two species is probably an inherited
feature of a common ancestor, and thus help to isolate the roles of
genes common to both.\footnote{Financial support for this research was
received from the Faculty Research Committee of The University of Akron}

Mathematically, a \emph{phylogenetic tree} is a cycle-free graph
with no nodes (vertices) of degree 2, and with a set of distinct
items assigned to the degree one nodes--that is, labeling the
leaves. We study a method called \emph{balanced minimal evolution}.
This method begins with a given set of $n$ items and a symmetric (or
upper triangular) square $n\times n$ \emph{dissimilarity matrix}
whose entries are numerical dissimilarities, or distances, between
pairs of items. From the dissimilarity matrix, often presented as a
vector of distances, the balanced minimal evolution (BME) method
constructs a binary (degree of vertices $\le3$) phylogenetic tree
with the $n$ items labeling the $n$ leaves.

 It is well known that if
a distance vector is created by starting with a given binary tree
$T$ with lengths assigned to each of its edges, and finding the
pairwise distances between leaves just by adding the edge lengths
along the path that connects them, then the tree $T$ is uniquely
recovered from that distance vector. The distance vector (or matrix)
is called \emph{additive} in this case. One recovery process is
called the \emph{sequential algorithm}, described first in
\cite{waterman}. It operates by leaf insertion and is performed in
polynomial time: $O(n^2)$. Another famous algorithm is
\emph{neighbor joining}, which reconstructs the tree in $O(n^3)$
time \cite{Saitou}. It has the advantage of being a greedy algorithm
for the BME problem, when extended to the non-additive case
\cite{Gascuel}.

An alternate method of recovery via minimization was introduced by
Pauplin in \cite{Pauplin} and developed by Desper and Gascuel in
\cite{fastme}. This BME method uses a linear functional on binary
phylogenetic trees $t$ (without edge lengths) defined using the
given distance vector. The output of the function is the length of
the original tree $T$ (assuming that the distance vector was created
from $T.$) The function is minimized when the input tree $t$ is
identical to $T$, as trees without edge lengths. Thus by minimizing
this functional, we recover the original \emph{tree topology}.
 The latter terminology is used to describe two
trees that are identical if we ignore edge lengths. The value of
this approach is that the given distance vector is often corrupted
by missing or incorrect data; but within error bounds we can still
recover the tree topology by the minimization procedure.
Furthermore, the BME method is \emph{statistically consistent} in
that as the distance vector approaches the accuracy of a true tree
$T,$ the BME method's output approaches that tree's topology
\cite{despernew, Atteson1999, Steel2006}.

More precisely: Let the set of $n$ distinct species, or taxa, be
called $S.$ For convenience we will often let $S = [n] =
\{1,2,\dots,n\}.$ Let vector $\mathbf{d}$ be given, having ${n
\choose 2}$ real valued components $d_{ij}$, one for each pair
$\{i,j\}\subset S.$ There is a vector $\mathbf{c}(t)$ for each
binary tree $t$ on leaves $S,$ also having ${n \choose 2}$
components $c_{ij}(t)$, one for each pair $\{i,j\}\subset S.$ These
components are ordered in the same way for both vectors, and we will
use the lexicographic ordering: $\mathbf{d} =
\left<d_{12},d_{13},\dots,d_{1n},d_{23},d_{24},\dots,d_{n-1,n}\right>
$.

We define, following Pauplin \cite{Pauplin}:
 $$c_{ij}(t) = \frac{1}{2^{l_{ij}(t)}}$$
where ${l_{ij}(t)}$ is the number of internal nodes (degree 3
vertices) in the path from leaf $i$ to leaf $j.$

If a phylogenetic tree $T$ with non-negative edge lengths is given,
then we can define the distance vector $\mathbf{d}(T)$ by adding the
edge lengths between each pair of leaves. Then the dot product
$\mathbf{c}(T)\cdot\mathbf{d}(T)$ is equal to the sum of all the
edge lengths of $T,$ a sum which is known as the \emph{tree length}.
$T$ is uniquely determined by $\mathbf{d}(T)$ (unless there are
length zero edges, in which case there is a finite set of trees
determined). Using any other tree $t$ as the input of
$\mathbf{c}(t)$ will give a sub-optimal, larger value for
$\mathbf{c}(t)\cdot\mathbf{d}(T).$

The BME tree for an arbitrary positive vector $\mathbf{d}$ is the
binary tree $t$ that minimizes $\mathbf{d}\cdot\mathbf{c}(t)$ for
all binary trees on leaves $S.$ Now this dot product is the least
variance estimate of treelength, as shown in \cite{despernew}.   The
value of setting up the question in this way is that it becomes a
linear  programming problem. The convex hull of all the vectors
$\mathbf{c}(t)$ for all binary trees $t$ on $S$ is a polytope
BME$(S)$, hereafter also denoted BME($n$) or $\mathcal{P}_n$ as in
\cite{Eickmeyer} and \cite{Rudy}. The vertices of $\mathcal{P}_n$
are precisely the $(2n-5)!!$ vectors $\mathbf{c}(t).$ Minimizing our
dot product over this polytope is equivalent to minimizing over the
vertices, and thus amenable to the simplex method.

In Fig.~\ref{f:2dbmes} we see the 2-dimensional polytope
$\mathcal{P}_4.$ In that figure we illustrate a simplifying choice
that will be used throughout: rather than the original fractional
coordinates $c_{ij}$ we will scale by a factor of $2^{n-2},$ giving
a new vector $\mathbf{x}(t)$ with coordinates:

$$x_{ij}(t)=2^{n-2}c_{ij}(t) = 2^{n-2-{l_{ij}(t)}}.$$

The convex hull of the vectors $\mathbf{x}(t)$ is a combinatorially
equivalent scaled version of the BME polytope, so we refer to it by
the same name. Since the furthest apart any two leaves may be is a
distance of $n-2$ internal nodes, this scaling will result in
integral coordinates for our polytope. The tree $t$ that minimizes
$\mathbf{d}\cdot\mathbf{c}(t)$ will also minimize
$\mathbf{d}\cdot\mathbf{x}(t).$

\begin{figure}[b!]\centering
                  \includegraphics[width=\textwidth]{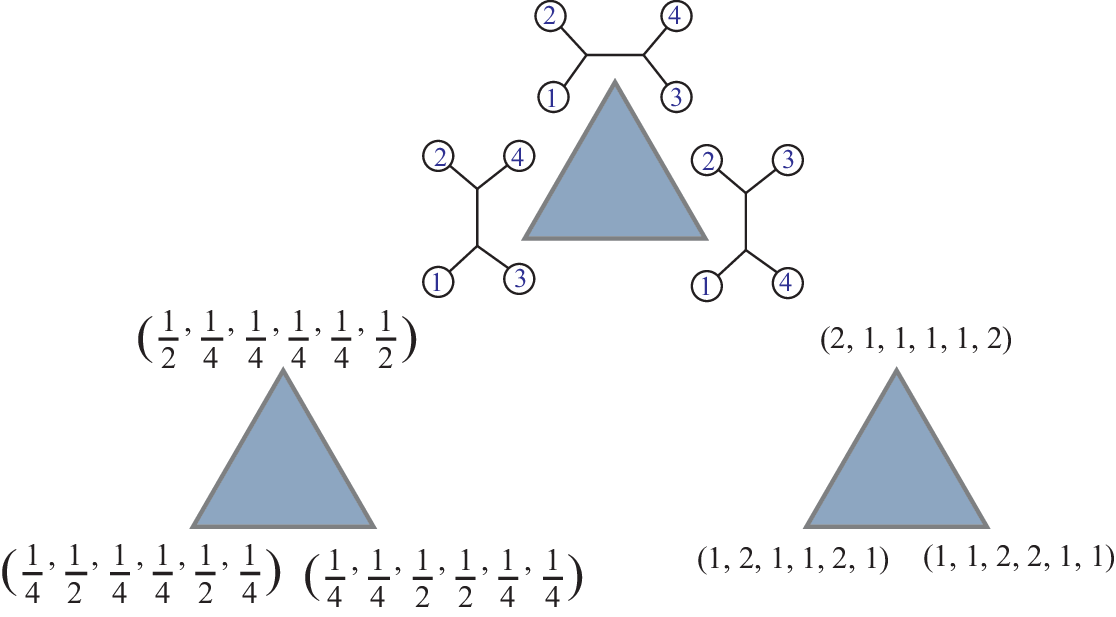}
\caption{The polytope $\mathcal{P}_4$ is a triangle. At the top we
label the vertices with the three binary trees with leaves $1\dots
4$. Each edge shows a nearest-neighbor interchange; for instance the
exchange of leaves 3 and 4 on the bottom edge. At bottom left are
Pauplin's original coordinates and at bottom right are the
coordinates, scaled by $2^{n-2}=4$, which we will
use.}\label{f:2dbmes}
\end{figure}

A \emph{clade} is a subgraph of a binary tree induced by an internal
 (degree three) node and all of the leaves descended from it in a particular
 direction. In other words: given an
 internal node $v$ we choose two of its edges and all of the leaves that
 are connected to $v$ via those two edges. Equivalently, given any
 internal edge, its deletion separates the tree into two clades. Two clades on the same tree
  must be either disjoint or \emph{nested}, one contained in the other. A
 \emph{cherry} is a clade with two leaves. We often refer to a clade
 by its set of (2 or more) leaves.  A pair of \emph{intersecting cherries} $\{a,b\}$
 and $\{b,c\}$ have intersection in one leaf $b$, and thus cannot exist both on the same tree. A \emph{caterpillar} is a tree
 with only two cherries. A
\emph{split} of the set of $n$ leaves for our phylogenetic trees is
a partition of the leaves into two  parts, one part called $S_1$
with $m$ leaves and another $S_2$ with the remaining $n-m$ leaves. A
tree \emph{displays} a split if each part makes up the leaves of a
\emph{clade}. A \emph{facet} of a polytope is a top-dimensional face
of that polytope's boundary, or a co-dimension-1 face. Faces of a
polytope can be of any dimension, from 0 to that of the (improper)
face which is the polytope itself.

\section{New Results}
Our most important new discovery is a large family of facets of the
BME polytope, which we call $split-facets$ in
Theorem~\ref{t:splitfacet}. This collection of facets is shown to
exist for all $n,$ and the number of facets in this family grows
like $2^n.$

In Theorem~\ref{t:faces} we show that any (non-binary) phylogenetic
tree corresponds to a face of $\p_n.$ This allows us to define a map
from the permutoassociahedron to the BME polytope, taking faces to
faces. In Theorem~\ref{t:lattice} we show that this map preserves
the partial order of faces.

In Theorem~\ref{t:cladeface} we show that a special case of these
tree-faces are the clade-faces discovered earlier in \cite{Rudy}. In
Theorem~\ref{t:splitfacet} we show that another special case of
tree-faces is our new class of facets of $\mathcal{P}_n.$

%

\section{Previous results}

Until recently, little was known about the structure of the BME
polytopes, but several newly discovered features were described in
\cite{Huggins} and \cite{Rudy}. The coordinates of the vertices
satisfy a set of $n$ independent equalities, which we will refer to
as the Kraft equalities, after an equivalent description in
\cite{Catan}. For each leaf $i$ we sum the coordinates that involve
it:

$$\sum_{j : j\ne i} x_{ij} = 2^{n-2}.$$
These equalities govern the dimension of the BME polytope,
dim$(\p_n) = {n \choose 2} -n.$

In \cite{Rudy} the authors prove the first description of faces of
the $n^{th}$ balanced minimal evolution polytope $\mathcal{P}_n$.
They find a family of faces that correspond to any set of disjoint
clades. In \cite{forcey2015facets} we show that these clade-faces
are not facets, but instead show several new familys of facets. We
add to that list here with a family of facets that grows
exponentially. (Our results are listed in columns 5--7 of
Table~\ref{facts}.)

 We show in \cite{forcey2015facets} that any pair of
intersecting cherries corresponds to a facet of $\mathcal{P}_n.$ For
each pair of cherries with leaves $\{a,b\}$ and $\{b,c\},$ there is
a facet of $\mathcal{P}_n$ whose vertices correspond to trees that
have either one of those two cherries.

In addition, any caterpillar tree with fixed ends corresponds to a
facet of $\mathcal{P}_n.$ Thus for each pair of species there is a
facet of $\mathcal{P}_n$ whose vertices correspond to trees which
are caterpillars with this pair as far apart as possible. Also shown
in \cite{forcey2015facets}: for $n=5,$ for each necklace of five
leaves there is a corresponding facet which is combinatorially
equivalent to a simplex.\vspace{.1in}

%
%
\begin{table}[hb!]
\begin{tabular}{|c|c|c|c||c|c|c|c|}
\hline
number& dim. & vertices &  facets & facet inequalities & number of & number of \\
of& of $\mathcal{P}_n$&of $\mathcal{P}_n$&of $\mathcal{P}_n$&(classification)& facets &   vertices \\
species&&&&&& in facet\\
\hline \hline
3 & 0 & 1 & 0 &-&-&- \\
\hline 4 & 2 & 3 & 3 & $ x_{ab}\ge 1$ & 3 & 2 \\   \cline{5-7}
&&&& $x_{ab}+x_{bc}-x_{ac} \le 2$ & 3&2\\
\hline
5 & 5 & 15 & 52 &  $x_{ab}\ge 1$ & 10&6 \\
&&&& (caterpillar)&&\\
 \cline{5-7}
&&&&$ x_{ab}+x_{bc}-x_{ac} \le 4$ & 30 & 6\\
&&&& (intersecting-cherry)&&\\
\cline{5-7} &&&&\scriptsize{ $x_{ab}+x_{bc}+x_{cd}+x_{df}+x_{fa}\le
13$} & 12 & 5\\
&&&& (cyclic ordering)&&\\
 \hline
 6 & 9 & 105 & 90262 &  $x_{ab}\ge 1$ & 15 &
24\\
&&&& (caterpillar)&&\\
  \cline{5-7}
 &&&& $~ x_{ab}+x_{bc}-x_{ac} \le 8$ &$60$ & $30$ \\
 &&&& (intersecting-cherry)&&\\
 \cline{5-7}
  &&&& $~ x_{ab}+x_{bc}+x_{ac} \le 16$& 10 & 9\\
&&&& $(3,3)$-split && \\
  \hline \hline
 \rule{0pt}{2.6ex}\rule[-1.2ex]{0pt}{0pt}   $n$ & $\binom{n}{2}-n$  & $(2n-5)!!$ & ? & $x_{ab}\ge 1$ &$\binom{n}{2}$& $(n-2)!$\\
&&&& (caterpillar)&&\\
  \cline{5-7}
 \rule{0pt}{2.6ex}\rule[-1.2ex]{0pt}{0pt}  &&&& $~x_{ab}+x_{bc}-x_{ac} \le 2^{n-3}$ &$\binom{n}{2}(n-2)$ & $2(2n-7)!!$ \\
 &&&& (intersecting-cherry)&&\\
 \cline{5-7} \rule{0pt}{2.6ex}\rule[-1.2ex]{0pt}{0pt} &&&& $x_{ab}+x_{bc}+x_{ac} \le 2^{n-2}$ & $\binom{n}{3}$ & $3(2n-9)!!$\\
  &&&& $(m,3)$-split, $m > 3$ && \\
  \cline{5-7} \rule{0pt}{2.6ex}\rule[-1.2ex]{0pt}{0pt} &&&&
$\displaystyle{\sum_{i,j\in S_1} x_{ij} \le (k-1)2^{n-3}}$ &
$2^{n-1}-\binom{n}{2}$ &
 $(2m-3)!!$\\
 &&&& $(m,k)$-split, & ~~$-n-1$ & $\times(2k-3)!!$ \\
   &&&& $m>2,k>2$ && \\
  \hline
\end{tabular}\caption{Technical statistics for the BME polytopes $\mathcal{P}_n$. The first four columns are found in \cite{Huggins} and
\cite{Rudy}. Our new and recent results are in the last 3 columns.
 The inequalities are given for any $a,b,c,\dots \in [n].$ Note that for
 $n=4$ the three facets are described twice: our inequalities are redundant. \label{facts}}
\end{table}

\section{Connection to the Permutoassociahedron}

The $n^{th}$ permutoassociahedron $\KP_n$, also known as the type-A Coxeter
associahedron, is defined in \cite{kapra}. It is discussed in detail in
\cite{ReinerZiegler1993}, and related to the space of phylogenetic trees in
\cite{bhv}. A face of the permutoassociahedron
 corresponds to an ordered partition of a set $S$ of $n$ elements, whose parts label the
leaves, left to right, of a rooted plane tree. We often use $S =
\{1,\dots,n\}.$ Alternatively we  may use $S = \{1,\dots,n+1\}
-\{r\}$ where $r\in S$ is the label for the root. Bijectively, one
of these labeled plane trees can also be described as a partial
bracketing of an ordered partition, such as
$((\{3\},\{4,5\}),\{2\},\{1,6,7\}).$

The inclusion of faces corresponds to refinement of the
ordered-partition trees: refinement of the tree structure by adding
branches at nodes with degree larger than 3 (so that the collapse of
the added branches returns the original tree) or refinement of the
ordered partition, in which parts of it are further partitioned
(subdivided, with ordering). To display the subdivision, the parts
of the refined partition label the ordered leaves of a new subtree:
a \emph{corolla}, which is a tree with one root, one internal node,
and 2 or more leaves.

 Tree refinement can also be described as
adding parentheses to the bracketing, or subdividing a set in the
bracketing.  A covering relation is either adding a single branch
(pair of parentheses) or subdividing a single part of the partition.
For examples of covering relations,

$((\{3\},\{4,5\}),\{2\},\{1,6,7\}) >
(((\{3\},\{4,5\}),\{2\}),\{1,6,7\})$

and

$((\{3\},\{4,5\}),\{2\},\{1,6,7\}) >
((\{3\},\{4,5\}),\{2\},(\{1\},\{6\},\{7\}))$

or

$((\{3\},\{4,5\}),\{2\},\{1,6,7\}) >
((\{3\},\{4,5\}),\{2\},(\{1,7\},\{6\})).$

The 2-dimensional $\KP_2$ is shown in Fig.~\ref{f:2dkp}. The
3-dimensional $\KP_3$ is shown in Fig.~\ref{f:3dkp}

\begin{figure}[b!]\centering
                  \includegraphics[width=4.25in]{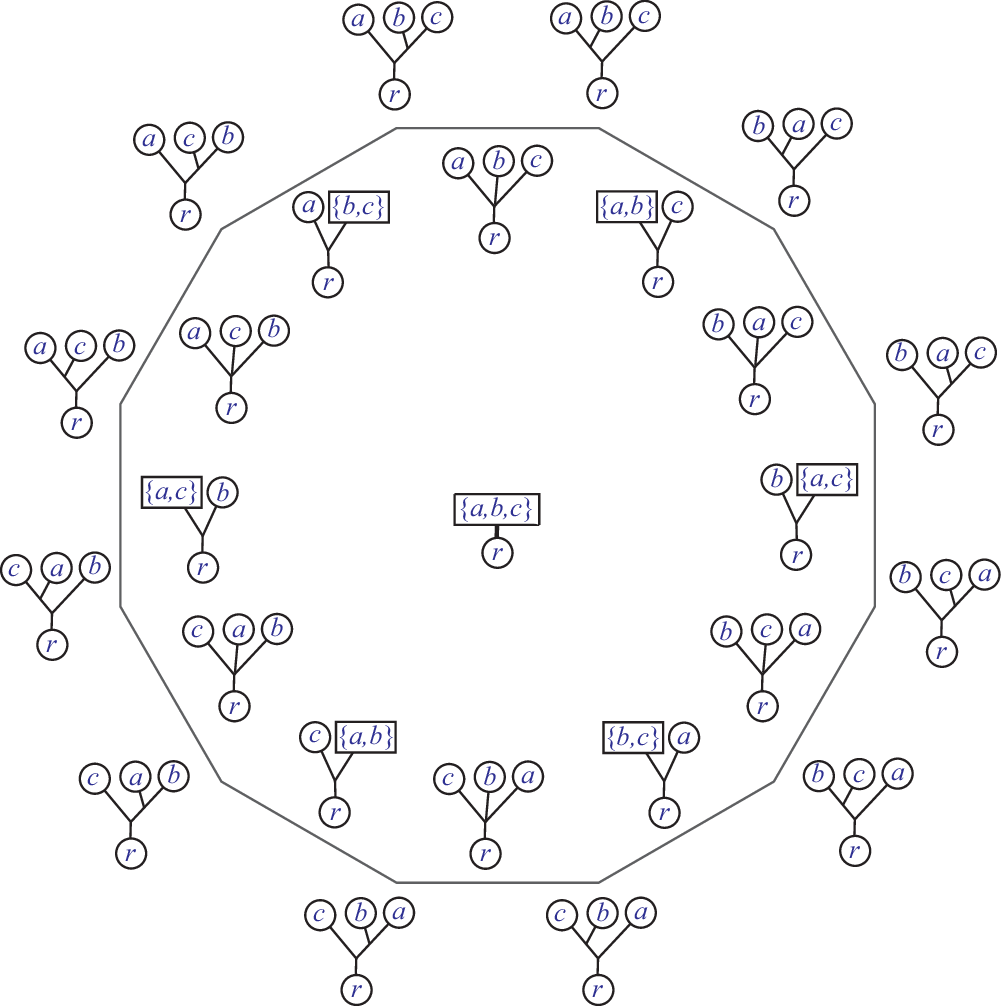}
\caption{The 2-dimensional permutoassociahedron with labeled
faces.}\label{f:2dkp}
\end{figure}

\begin{figure}[b!]\centering
                  \includegraphics[width=3.5in]{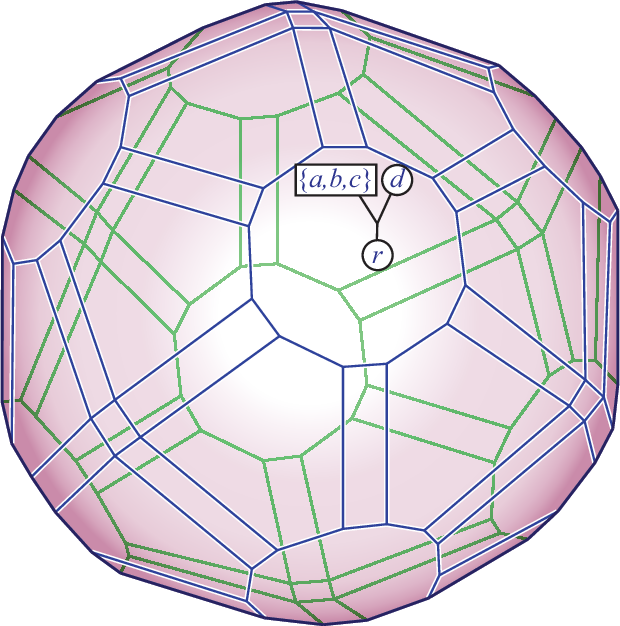}
\caption{The 3-dimensional permutoassociahedron with a labeled
facet. This picture is redrawn from a version in \cite{ReinerZiegler1993}.}\label{f:3dkp}
\end{figure}

There is a straightforward lattice map $\varphi$ from the faces of
$\KP_n$ to a sub-lattice of faces of the BME polytope. Since it
preserves the poset structure, its preimages are a nice set of
equivalence classes.
\begin{defi}
Let $t$ be a plane rooted tree with leaves an ordered partition
$\pi$ of $S.$ First let $t'$ be the tree achieved by replacing
each leaf labeled by part $U\in\pi$ such that $|U|>1$ with a corolla
labeled by the elements of $U.$ This corolla is attached at a new
branch node where the leaf labeled by $U$ was attached. Now let
$\varphi(t)$ be the tree $f(t')$, where $f$ is described as
un-gluing $t'$ from the plane in order to preserve only its
underlying graph.
\end{defi}

An example of the map $\varphi$ is shown in Fig.~\ref{f:phi}. Note
that forgetting the plane structure of $t'$ ensures that the map
$\varphi$ is well-defined. The corolla that replaces each leaf
labeled by $U\in\pi$ is immediately seen as unordered since it is
not fixed in the plane.

\begin{figure}[b!]\centering
                  \includegraphics[width=\textwidth]{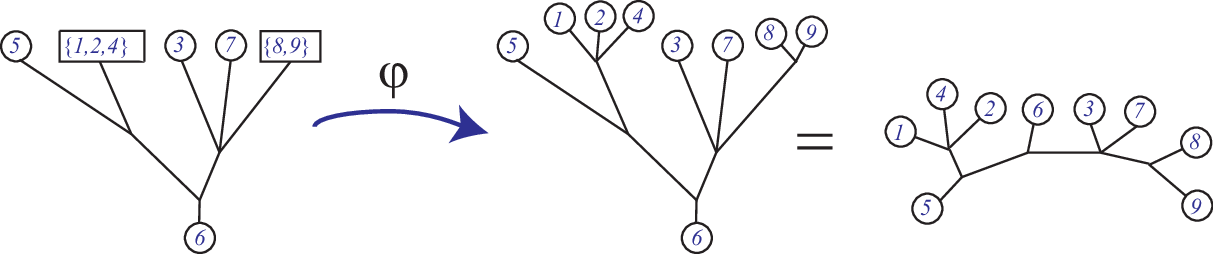}
\caption{Action of the map $\varphi$: the second step shows that we
no longer preserve plane structure or rooted-ness. }\label{f:phi}
\end{figure}

Fig.~\ref{f:2phi} shows the full action of $\varphi$ on the
2-dimensional $\KP_2.$

\begin{figure}[b!]\centering
                  \includegraphics[width=6.5in]{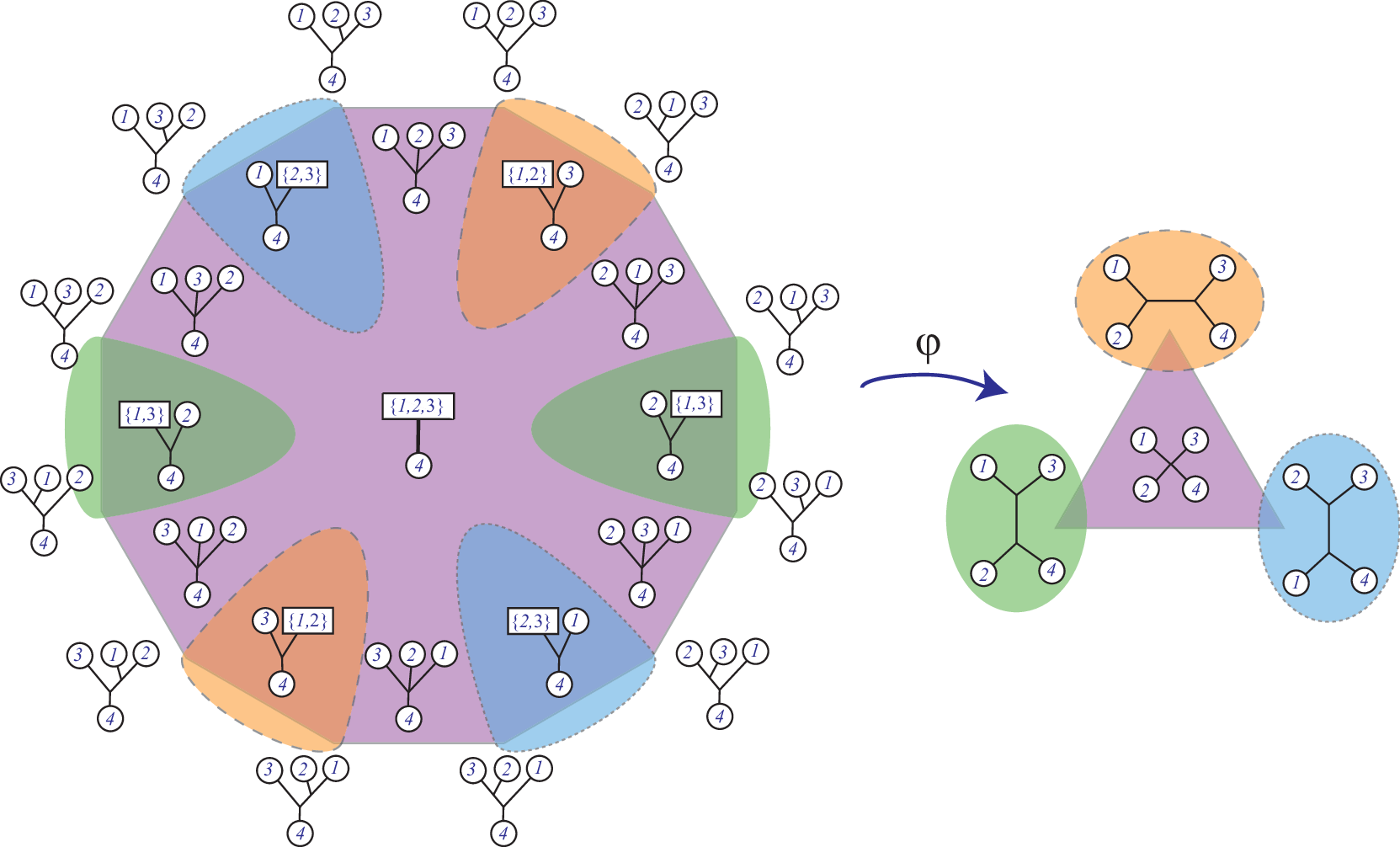}
\caption{Action of the map $\varphi$: the shaded faces all map to
the shaded vertices.}\label{f:2phi}
\end{figure}

Since our map preserves the face order, it takes vertices to
vertices. It is a set projection on vertices, and the number of
elements in a preimage has a nice formula:
\begin{prop} Let $T$ be a binary phylogenetic tree with $n$ leaves.
The number of ordered plane rooted binary trees $t$ such that
$\varphi(t) = T$ is $2^{n-2}.$
\end{prop}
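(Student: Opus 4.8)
The plan is to identify the set of vertices $t$ of $\KP$ with $\varphi(t)=T$ with the set of planar embeddings of $T$ after $T$ has been turned into a rooted tree in the unique way prescribed by the conventions for $\KP$, and then to count those embeddings.

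First I would record what $\varphi$ does to a vertex. Because $\varphi$ preserves the face order it carries vertices to vertices, so the trees $t$ to be enumerated are exactly the vertices of $\KP$: the plane rooted binary trees whose leaf partition $\pi$ is the finest one, with every part a singleton. For such a $t$ no part has size greater than $1$, so no corolla is inserted and the intermediate tree $t'$ in the definition of $\varphi$ equals $t$; hence $\varphi(t)=f(t)$ is simply the abstract labelled tree underlying $t$, obtained by forgetting both the planar embedding and the root. Therefore $\varphi(t)=T$ holds precisely when $t$, stripped of its planar structure and its root, is $T$; and conversely every vertex $t$ of $\KP$ with $\varphi(t)=T$ is recovered from $T$ by reinstating the root and then choosing a planar embedding, the leaf labels of $t$ being forced by those of $T$.

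The key point is that reinstating the root is not a free choice: the combinatorial data of a face of $\KP$ singles out one leaf as the root, so every $t\in\varphi^{-1}(T)$ has the same rooted tree underneath it, namely $T$ rooted at that distinguished leaf; call this rooted tree $\widehat{T}$. (Without this observation the count below would be too large by a factor of $n$.) Thus $\varphi^{-1}(T)$ is in bijection with the set of planar embeddings of $\widehat{T}$, where distinct orderings of children give genuinely distinct plane trees, hence distinct vertices of $\KP$, since $T$ has distinctly labelled leaves and so no nontrivial automorphism.

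It remains to count planar embeddings of $\widehat{T}$: such an embedding amounts to a choice, at each internal vertex, of a left-to-right order of its children. Since $T$ is binary, each of its $n-2$ internal vertices has exactly two children in $\widehat{T}$ (including the internal vertex adjacent to the root), contributing a factor of $2$, while the root and the other leaves contribute nothing; hence there are $2^{n-2}$ planar embeddings, so $|\varphi^{-1}(T)|=2^{n-2}$. I do not anticipate a genuine obstacle here, since the content is just a bijection followed by a product count; the two places that need care are checking that on the finest faces $\varphi$ is literally ``forget the planar embedding and the root'' (so that $t'=t$), and confirming that the root leaf is determined by $T$, so that the enumeration reduces to counting planar embeddings of the single rooted tree $\widehat{T}$ rather than of $n$ different ones.
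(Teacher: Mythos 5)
Your argument is correct, but it is not the argument the paper gives. The paper proves the formula globally: it observes that $\varphi$ is a surjection on vertices, asserts by symmetry of the leaf labelling that all fibers have the same cardinality, and then divides the vertex count of the permutoassociahedron by that of the BME polytope, $\frac{C_{n-2}(n-1)!}{(2n-5)!!}=2^{n-2}$, using the Catalan and double-factorial formulas. You instead compute a single fiber directly: you note that on vertices (all parts singletons) $\varphi$ is literally ``forget the plane structure and the rooting,'' that the root label is fixed by the conventions for $\KP$ so every preimage of $T$ is a planar structure on the one rooted tree $\widehat{T}$ obtained by rooting $T$ at the distinguished leaf, and that such planar structures correspond to a binary choice at each of the $n-2$ internal vertices, giving $2^{n-2}$. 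Your route buys something real: it does not need the closed-form enumerations, and it does not need the equal-fiber-size claim, which in the paper is the step requiring the most care (relabelling leaves only relates trees of the same shape, so ``by symmetry'' is doing nontrivial work there); your local count handles every tree shape uniformly and in fact yields the paper's division identity as a byproduct. What the paper's approach buys is brevity given the standard counts of plane binary trees and of binary phylogenetic trees. Your two flagged checkpoints — that $t'=t$ on the finest faces and that the root leaf is determined by the fixed root label rather than freely chosen — are exactly the right points to verify, and both hold under the paper's conventions.
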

\begin{proof} We note that the map is a surjection from vertices to vertices, since any leaf of a binary phylogenetic tree may be chosen as the root. By symmetry of the labeling of leaves, the size of
each preimage must be the same. Here $n$ is the total number of
leaves, so in the permutoassociahedron vertices we are considering
plane binary trees with $n-1$ leaves and a root. We divide the total
numbers of vertices of the two polytopes:
$$\frac{C_{n-2}(n-1)!}{(2n-5)!!} = 2^{n-2}.$$
Here we have used the formula for Catalan numbers: $C_{n-2} =
\frac{1}{n-1}{2(n-2) \choose n-2}.$ We also use the formula
$(2n-5)!! = \frac{(2n-4)!}{2^{n-2}(n-2)!}.$ \qed \end{proof}

Now we show how the targets of the map $\varphi$ are actually faces
of the BME polytope. Note that the image of the (improper) face
which is the entire permutoassociahedron (as well as any of its
corrolla facets) is the phylogenetic tree which is a corolla, or
star: it has only one node with degree $\ge 3$. This corolla
corresponds to the (improper) face which is the entire BME polytope.
In what follows we will assume we are speaking of proper faces.

\begin{theorem}\label{t:faces}
For each non-binary phylogenetic tree $t$ with $n$ leaves there is a
corresponding face $F(t)$ of the BME polytope $\p_n$. The vertices
of $F(t)$ are the binary phylogenetic trees which are refinements of
$t.$
\end{theorem}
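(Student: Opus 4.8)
The plan is to realize $F(t)$ as the intersection of $\p_n$ with a supporting hyperplane whose defining linear functional is built directly from the combinatorics of $t$. The natural candidate is the functional $\ell_t(\mathbf{x}) = \sum_{\{i,j\}} w_{ij}\, x_{ij}$ where the weight $w_{ij}$ records whether the path from leaf $i$ to leaf $j$ in $t$ passes through an internal node of degree $\ge 4$ — equivalently, whether $i$ and $j$ lie in the same ``block'' determined by a high-degree node of $t$. More concretely, for each internal node $v$ of $t$ of degree $d_v \ge 4$, the deletion of $v$ splits the leaves into $d_v$ groups; a binary refinement of $t$ must resolve each such node by inserting a caterpillar or more general binary tree among those $d_v$ groups. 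The key observation is that the Pauplin coordinates $x_{ij}(T)$ for a refinement $T$ of $t$ are constrained: for pairs $i,j$ separated by a fixed clade structure of $t$, the exponent $2^{n-2-l_{ij}(T)}$ is maximized (or minimized) precisely when $T$ refines $t$ rather than some competing non-binary tree, because refining $t$ keeps $i$ and $j$ as close as the clade structure of $t$ allows.

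First I would make precise which functional to use. I expect the cleanest route is to decompose $\ell_t$ as a nonnegative combination of the ``edge functionals'' already known from \cite{Rudy} and \cite{forcey2015facets}: each internal edge $e$ of $t$ determines a split $S_1 \mid S_2$, and the corresponding clade-face inequality (the $(m,k)$-split inequality from Table~\ref{facts}, or more elementary support inequalities) is tight exactly on those binary trees displaying that split. A binary tree $T$ displays \emph{all} the splits of $t$ if and only if $T$ is a refinement of $t$. So I would set $\ell_t = \sum_{e \in E(t)} \lambda_e \,(\text{functional for the split of } e)$ with $\lambda_e > 0$, and the associated bound $b_t = \sum_e \lambda_e b_e$; then $\ell_t(\mathbf{x}(T)) = b_t$ forces equality in every summand, hence $T$ displays every split of $t$, hence $T$ refines $t$; conversely a refinement satisfies every summand with equality. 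This shows the face cut out by $\ell_t \le b_t$ has vertex set exactly $\{\mathbf{x}(T) : T \text{ refines } t\}$.

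The second step is to check this really is a proper face and not the whole polytope or the empty set: since $t$ is non-binary it has at least one proper internal edge (or at least one high-degree node whose resolution is a genuine choice), so at least one of the split-functionals is nonconstant on $\p_n$, and since some binary tree fails to display that split, $b_t$ is not attained everywhere; and since $t$ has at least one refinement, the face is nonempty. I would also want the face $F(t)$ to depend on $t$ the way the theorem implicitly claims — distinct $t$'s giving the stated vertex sets — which follows because a collection of compatible splits is recoverable from the set of binary trees displaying all of them.

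The main obstacle I anticipate is verifying that the specific linear combination $\ell_t$ is genuinely \emph{valid} (i.e.\ $\le b_t$ on all of $\p_n$, not merely on the refinements) with the right equality set — in other words, that summing the individual split-inequalities does not accidentally create a hyperplane that also touches some non-refining tree. This is exactly the point where one needs the explicit constants: one must confirm that for a binary tree $T$ that violates the split of some edge $e$ of $t$, the deficit in the $e$-summand is not compensated by a surplus elsewhere. Because each split-functional is individually valid on $\p_n$ with its stated bound (granted from the cited work), each summand is $\le \lambda_e b_e$ termwise, so the sum is $\le b_t$ automatically, and equality in the sum forces equality termwise — so this obstacle dissolves once the termwise-validity bookkeeping is done carefully. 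The remaining care is purely in confirming the equality case of each elementary split-inequality is precisely ``$T$ displays this split,'' which is where I would spend the detailed argument, using the Kraft equalities $\sum_{j \ne i} x_{ij} = 2^{n-2}$ to pin down the coordinate sums block by block.
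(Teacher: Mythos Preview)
Your approach is correct and takes a genuinely different route from the paper. The paper builds a single explicit functional directly: it sets $d_{ij}(t)$ equal to the number of edges on the $i$--$j$ path in $t$, observes that this is an \emph{additive} distance vector for every binary refinement $t'$ of $t$ (assign length $1$ to the edges of $t$ and length $0$ to the edges collapsed in passing from $t'$ back to $t$), and then invokes the foundational BME property that $\mathbf{d}(T)\cdot\mathbf{x}(t')$ is minimised precisely when $t'$ has the same topology as $T$, with minimum value $2^{n-2}$ times the tree length $|E(t)|$. Your route instead realises $F(t)$ as an intersection of split-faces, one per internal edge of $t$, by summing their defining inequalities and using that equality in a sum of valid inequalities forces termwise equality. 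Both arguments are short once the right ingredient is in hand, but they rest on different ingredients: the paper's proof is self-contained given Pauplin's characterisation, whereas yours requires the split inequality $\sum_{i,j\in S_1}x_{ij}\le (|S_1|-1)2^{n-3}$ together with its equality case, which in this paper is established only later (the Lemma opening Section~6) and is not in \cite{Rudy} or \cite{forcey2015facets} for general part sizes---so you would need to reorder or supply that lemma first. The two functionals are in fact the same object in disguise: rewriting the paper's $\sum_{i<j}d_{ij}(t)x_{ij}=\sum_{e\in E(t)}\sum_{i\in S_1^e,\,j\in S_2^e}x_{ij}$ via the Kraft equalities converts each cross-split sum into a constant minus twice the within-split sum, yielding exactly your $\ell_t$ with all $\lambda_e=1$ (up to an additive constant coming from the pendant edges). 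So your decomposition is implicitly what the paper's functional encodes; the paper simply proves the whole package at once via the BME minimisation property rather than establishing it edge by edge.
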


\begin{proof}
We show that for each non-binary $t$ there is a distance vector
${\mathbf d}(t)$ for which the product ${\mathbf d}(t)\cdot {\mathbf
x}(t')$ is minimized simultaneously by precisely the set of binary
phylogenetic trees $t'$ which refine $t.$

The distance vector ${\mathbf d}(t)$ is defined as follows: the
component $d_{ij}(t)$ is the number of edges in the path between
leaf $i$ and leaf $j.$ Next we show that, for any tree $t'$, we have
the inequality:
$$\sum_{i<j}d_{ij}(t)x_{ij}(t') \ge 2^{(n-2)}|E(t)|$$
where $E(t)$ is the set of edges of $t.$ Moreover, we will show that
the inequality is precisely an equality if and only if the tree $t'$
is a refinement of $t.$

Our vector ${\mathbf d}(t)$ is constructed to be a vector of
distances (of paths between leaves) for any binary tree that refines
$t.$ This is seen by assigning a length of 1 to each edge of the
tree $t$, and calculating the distances between leaves by adding the
edge lengths on the path between them for any two leaves. A binary
tree $t'$ that refines $t$ is similarly given lengths of 1 for its
edges, except for those edges whose collapse would return $t'$ to
the tree $t.$ These latter edges are assigned a length of zero.

Now our result follows: given a distance vector whose components are
the distances between leaves on a binary tree, the dot product of
this vector with vertices of the BME polytope is minimized at the
vertex corresponding to that tree. In our case all the binary trees
$t'$ which refine $t$, with their assigned edge lengths, share the
distance vector ${\mathbf d}(t)$. Thus they are simultaneously the
minimizers of our product, and the value of that product is
$2^{(n-2)}$ times their common tree length. \qed \end{proof}

\begin{defi} For a non-binary phylogenetic tree $t$ we call the corresponding face of the BME
polytope the \emph{tree-face} $F(t).$
\end{defi}

An example of a tree-face, its vertices, and its inequality as given
in the proof of Theorem~\ref{t:faces}, are shown in
Fig.~\ref{f:examp}.

\begin{figure}[b!]\centering
                  \includegraphics[width=5in]{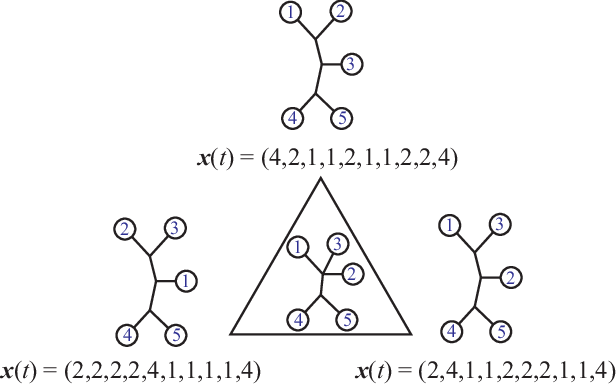}
\caption{The three binary trees shown are the vertices of the
tree-face corresponding to the tree in the center. The inequality
which defines this face is:
$2x_{12}+2x_{13}+3x_{14}+3x_{15}+2x_{23}+3x_{24}+3x_{25}+3x_{34}+3x_{35}+2x_{45}
\ge 48$ }\label{f:examp}
\end{figure}

Some special cases of tree faces are important. First we mention the
case in which the tree $t$ has only one non-binary node, that is,
exactly one node with degree larger than 3. Thus $t$ can be seen as
a collection of clades (and some single leaves) all attached to the
non-binary node.

\begin{prop}\label{t:cladeface}
For $t$ an $n$-leaved phylogenetic tree with exactly one node $\nu$ of degree
$m>3$, the tree-face $F(t)$ is precisely the clade-face $F_{C_1,\dots,C_p},$
defined in \cite{Rudy}, corresponding to the collection of clades
$C_1,\dots,C_p$ which result from deletion of $\nu.$ Thus $F(t)$ is
combinatorially equivalent to the smaller dimensional BME polytope $\p_m.$
\end{prop}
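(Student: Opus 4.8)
The plan is to identify the tree-face $F(t)$ with the clade-face $F_{C_1,\dots,C_p}$ by showing the two faces have the same vertex set, and then invoke the combinatorial equivalence of $F_{C_1,\dots,C_p}$ with $\mathcal{P}_m$ already established in \cite{Rudy}. First I would recall the defining property of $F(t)$ from Theorem~\ref{t:faces}: its vertices are exactly the binary refinements $t'$ of $t$. Since $t$ has a single high-degree node $\nu$ whose deletion yields the clades $C_1,\dots,C_p$ (where, by convention, single leaves count as trivial ``clades''), a binary tree $t'$ refines $t$ precisely when each $C_i$ appears as a clade of $t'$ and all of the ``resolving'' structure of $t'$ is confined to the neighborhood of $\nu$ — i.e., $t'$ is obtained by picking an arbitrary binary tree on the $m$ ``meta-leaves'' $C_1,\dots,C_p$ together with arbitrary binary resolutions inside each $C_i$. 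This is exactly the description of the vertex set of the clade-face $F_{C_1,\dots,C_p}$ in \cite{Rudy}: the trees displaying all of the clades $C_1,\dots,C_p$ simultaneously.

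Next I would make the vertex-set match precise. On one side, Theorem~\ref{t:faces} tells us $\mathrm{Vert}(F(t)) = \{\,\mathbf{x}(t') : t' \text{ a binary refinement of } t\,\}$. On the other side, $\mathrm{Vert}(F_{C_1,\dots,C_p})$ is the set of $\mathbf{x}(t')$ for binary $t'$ that display every $C_i$. The containment ``refinement of $t$ $\Rightarrow$ displays each $C_i$'' is immediate, since collapsing the added edges of $t'$ must return $t$, and each $C_i$ is a clade of $t$. For the reverse, if $t'$ displays all $C_i$, then because the $C_i$ partition the leaf set and each is a clade, contracting every edge of $t'$ not internal to some $C_i$ and not adjacent to the ``meta-structure'' collapses $t'$ back to the single-node tree on the blocks, which is exactly $t$; hence $t'$ refines $t$. (One should check the boundary cases: trivial blocks $C_i$ that are single leaves contribute no internal structure, and if some $C_i$ has exactly two leaves it is already a cherry with no resolution freedom — these are consistent with both descriptions.) Since two faces of a polytope with the same vertex set are equal, $F(t) = F_{C_1,\dots,C_p}$.

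Finally, the combinatorial equivalence $F(t) \cong \mathcal{P}_m$ follows directly: \cite{Rudy} shows $F_{C_1,\dots,C_p}$ is combinatorially equivalent to $\mathcal{P}_p$ where $p = m$ is the degree of $\nu$ (the number of branches at $\nu$, equivalently the number of blocks $C_i$), via the map that sends a tree displaying the $C_i$ to the induced tree on the $m$ meta-leaves obtained by contracting each $C_i$ to a point. I expect the main (though still modest) obstacle to be bookkeeping around the degenerate blocks — making sure the count $m$ of branches at $\nu$ correctly matches the subscript in $\mathcal{P}_m$ when some of the $C_i$ are single leaves or cherries — and stating the refinement-versus-displays equivalence carefully enough that the edge-contraction argument is unambiguous. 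Everything else is a direct appeal to Theorem~\ref{t:faces} and the clade-face results of \cite{Rudy}.
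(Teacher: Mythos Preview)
Your proposal is correct and follows essentially the same route as the paper: both identify the vertex set of $F(t)$ (binary refinements of $t$, via Theorem~\ref{t:faces}) with the vertex set of the clade-face (trees obtained by attaching the fixed clades $C_1,\dots,C_p$ to the leaves of an arbitrary binary tree on $m$ leaves), and then cite \cite{Rudy} for the combinatorial equivalence with $\p_m$. The one bookkeeping point you correctly anticipate is that the paper does \emph{not} count single leaves as clades, so in general $p\le m$ with $m-p$ single leaves attached at $\nu$; your tentative identification $p=m$ should be adjusted accordingly, but this does not affect the argument.
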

\begin{proof}
Any tree $t'$ which is a binary refinement of $t$ can be constructed
by attaching the clades $C_1,\dots,C_p$ to $p$ of the leaves of a
binary tree $\hat{t}$. Note that since we don't consider single
leaves to be clades, we need to say that $\hat{t}$ has $m$ leaves
where $m-p$ is the number of single leaves attached to $\nu.$

Recall from \cite{Rudy} that the face $F_{C_1,\dots,C_p}$ is the
image of an affine transformation of the BME polytope $\p_{m}.$ As
stated by those authors, this combinatorial equivalence follows
since every tree in $F_{C_1,\dots,C_p}$ can be constructed by
starting with a binary tree on $m$ leaves and attaching the clades
$F_{C_1,\dots,C_p}$ to $p$ of the $m$ leaves. \qed \end{proof} See
Fig.~\ref{f:examp} for an example of a clade-face, in fact a
\emph{cherry clade-face}, where the single clade in question is the
cherry $\{4,5\}$.

In \cite{Rudy} it is pointed out that the clade-faces form a
sub-lattice of the lattice of faces of $\p_n.$ Containment in that
sublattice is simply refinement, where a sub-clade-face of a clade-
face $F(t)$ can be found by refining the tree $t$, as long as the
result still has only a single non-binary node.

Now it is straightforward to see that refinement of trees in general
gives a partial ordering of tree-faces, and indeed another
sub-lattice of faces of the BME polytope which contains the
clade-faces as a sub-lattice. We note that the map $\varphi$ from
the permutoassociahedron is a lattice map.
\begin{prop}\label{t:lattice}
If $x \le y$  (thus containment as faces in the face lattice of
$\KP_n,)$ then $\varphi(x) \le \varphi(y)$ (so containment as faces
in the face lattice of $\p_n,$ the BME polytope).
 \end{prop}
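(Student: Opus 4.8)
The plan is to reduce the statement to the covering relations of the face lattice of $\KP_n$ and then to push each cover through the definition of $\varphi$. For a face $z$ of $\KP_n$ we write $\tau_z$ for the plane rooted ordered-partition tree indexing it, and $\tau_z'$ for the tree obtained from $\tau_z$ by the corolla-replacement step in the definition of $\varphi$, so that $\varphi(\tau_z)=f(\tau_z')$ is a (possibly non-binary) phylogenetic tree on $n$ leaves with $F(\varphi(\tau_z))=\varphi(z)$; here $z\le w$ means exactly that $\tau_z$ refines $\tau_w$ in the combined tree-and-partition sense recalled above. We first record a general observation: if a phylogenetic tree $s'$ refines a phylogenetic tree $s$, then by transitivity of refinement every binary refinement of $s'$ is a binary refinement of $s$, so by Theorem~\ref{t:faces} the vertex set of $F(s')$ lies in that of $F(s)$; since $F(s')=\mathrm{conv}\big(V(F(s'))\big)\subseteq\mathrm{conv}\big(V(F(s))\big)=F(s)$ and $F(s')$ is itself a face of $\p_n$, it is a face of $F(s)$, i.e.\ $F(s')\le F(s)$. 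Thus it suffices to show that $x\le y$ in $\KP_n$ forces $\varphi(\tau_x)$ to be a refinement of $\varphi(\tau_y)$, where we permit equality. Since the face poset of $\KP_n$ is graded, $x\le y$ is witnessed by a saturated chain of covers; as refinement of phylogenetic trees is transitive, it is enough to handle one covering relation $x\lessdot y$, and the top face $\KP_n$ needs no separate treatment because every phylogenetic tree refines the corolla.

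There are exactly two kinds of covers in $\KP_n$ (see \cite{kapra, ReinerZiegler1993}). In the first, $\tau_x$ comes from $\tau_y$ by inserting one new edge at a node $v$ of degree larger than $3$, partitioning the edges at $v$ into two blocks. Such a $v$ is a branching node of the plane tree and is therefore disjoint from the leaves labelled by parts of size larger than $1$, which are the only places the corolla-replacement step acts. Hence the corolla-replacements producing $\tau_x'$ and $\tau_y'$ agree away from $v$, and $\tau_x'$ is obtained from $\tau_y'$ by the same edge insertion at $v$; forgetting the plane and rooted structure, $\varphi(\tau_x)$ is obtained from $\varphi(\tau_y)$ by splitting a single node of degree $>3$ into two nodes joined by a new internal edge, and contracting that edge returns $\varphi(\tau_y)$. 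So $\varphi(\tau_x)$ refines $\varphi(\tau_y)$.

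In the second kind of cover, $\tau_x$ comes from $\tau_y$ by subdividing one part $U$ (necessarily $|U|\ge 2$) into an ordered tuple $(U_1,\dots,U_k)$ with $k\ge 2$, replacing the leaf labelled $U$ by a corolla whose leaves carry the labels $U_1,\dots,U_k$ in order. Since $|U|>1$, the tree $\varphi(\tau_y)$ contains a corolla $C$ on the elements of $U$; in $\varphi(\tau_x)$ this $C$ is replaced by a node joined directly to the leaf $U_i$ for each singleton $U_i$ and to the center of the sub-corolla on $U_i$ for each $U_i$ with $|U_i|>1$. As the $U_i$ partition $U$, this local piece again has leaf set $U$, and contracting the at most $k$ newly created edges to the sub-corolla centers recovers $C$; hence $\varphi(\tau_x)$ refines $\varphi(\tau_y)$, with equality precisely when every $U_i$ is a singleton. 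This exhausts the covers, and with the general observation of the first paragraph the proposition follows. The main obstacle is the bookkeeping in these two cases: checking that corolla-replacement genuinely commutes with a branch insertion at a branching node, that forgetting the plane and rooted structure creates no degree-two vertex (so the outputs are bona fide phylogenetic trees), that contracting exactly the newly created edges yields the coarser tree in each case, and that the equality case in the second cover type is not overlooked; one also leans on the classical fact that insertion of a branch at a node of degree $>3$ and subdivision of a single part are the only covering relations of the permutoassociahedron.
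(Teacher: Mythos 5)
Your proposal is correct and follows essentially the same route as the paper: the paper's (much terser) proof likewise treats the two kinds of refinement in $\KP_n$ — adding a branch to the plane rooted tree and subdividing a part of the ordered partition — and observes that each corresponds to refinement of $\varphi(t)$, hence to containment of tree-faces via Theorem~\ref{t:faces}. Your version simply makes explicit the reduction to covering relations, the convex-hull/vertex-set argument that refinement of phylogenetic trees yields face containment, and the equality case when a part is split into singletons.
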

 \begin{proof}
 The refinement of a labeled plane rooted tree $t$, or the refinement of the
 ordered partition labeling the leaves, both correspond to the
 refinement of $\varphi(t)$. The former is direct, the latter is seen
 via the replacement of parts in the partition by the corresponding
 corollas, before and after subdivision.
 \qed \end{proof}

Next we look at what are perhaps the most important tree-faces:
those which correspond to facets of the BME polytope. It turns out
that these facets correspond to trees $t$ which have exactly two
adjacent nodes with degree larger than 3.

\begin{theorem}\label{t:splitfacet}
Let  $t$ be a phylogenetic tree with $n>5$ leaves which has exactly
one interior edge $\{\nu, \mu\}$, with $\nu$ and $\mu$ each having
degree larger than 3. Then the trees which refine $t$ are the
vertices of a facet of the BME polytope $\p_n.$
\end{theorem}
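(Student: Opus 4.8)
The plan is to realize $F(t)$ as the face cut out by a single explicit inequality, to describe its vertices in coordinates inherited from two smaller BME polytopes, and then to show this face has codimension exactly one by ruling out every linear equation on it other than the obvious ones.

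First I would fix notation. Let $S_1$ (with $m=|S_1|$ leaves) and $S_2$ (with $k=|S_2|$ leaves) be the leaves hanging off $\nu$ and $\mu$; since $\nu$ and $\mu$ have degree at least $4$ we have $m,k\ge 3$ and $m+k=n>5$. Every binary refinement of $t$ has the form $t'=a\mid b$, where $a$ resolves $\nu$ into a binary tree on $S_1\cup\{\ast\}$ and $b$ resolves $\mu$ into a binary tree on $S_2\cup\{\ast'\}$, with $\ast,\ast'$ marking the two half-edges of the split-edge $\{\nu,\mu\}$. Then I would apply Theorem~\ref{t:faces}: the distance vector $\mathbf d(t)$ has $d_{ij}=2$ when $i,j$ lie in the same block of the split and $d_{ij}=3$ otherwise, and $|E(t)|=n+1$, so reducing the inequality $\sum_{i<j}d_{ij}x_{ij}\ge 2^{n-2}|E(t)|$ modulo the Kraft equalities $\sum_{j\ne i}x_{ij}=2^{n-2}$ identifies $F(t)$ with the face $\p_n\cap\{\sum_{i\in S_1,\,j\in S_2}x_{ij}=2^{n-2}\}$ (equivalently $\sum_{\{i,j\}\subseteq S_1}x_{ij}=(m-1)2^{n-3}$), whose vertices are the $\mathbf x(a\mid b)$. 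Counting internal nodes on the two sides of the split-edge separately and using $x_{ij}=2^{n-2-l_{ij}}$ together with $m+k=n$, I would record the key formulas: writing $\mathbf x^{(m+1)}(a)$ and $\mathbf x^{(k+1)}(b)$ for the associated vertices of $\p_{m+1}$ and $\p_{k+1}$, one has $x_{ij}(a\mid b)=2^{k-1}x^{(m+1)}_{ij}(a)$ when $i,j\in S_1$, $x_{ij}(a\mid b)=2^{m-1}x^{(k+1)}_{ij}(b)$ when $i,j\in S_2$, and $x_{ij}(a\mid b)=x^{(m+1)}_{i\ast}(a)\,x^{(k+1)}_{j\ast'}(b)$ when $i\in S_1,\ j\in S_2$. (These also recover the defining equation, since $\sum_i x^{(m+1)}_{i\ast}=2^{m-1}$ and $\sum_j x^{(k+1)}_{j\ast'}=2^{k-1}$ are the Kraft equalities at $\ast$ and $\ast'$.)

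Next comes the dimension count. The functional $C(x)=\sum_{i\in S_1,j\in S_2}x_{ij}$ is not a linear combination of the Kraft functionals — matching coefficients on pairs inside $S_1$ would force $c_i+c_j=0$ on a set of size $\ge 3$, hence $c\equiv 0$ — so $\dim F(t)\le\dim\p_n-1$, and it suffices to show every linear equation valid on all vertices $\mathbf x(a\mid b)$ is a combination of the Kraft equalities and $C=2^{n-2}$. The idea is to restrict such an equation $\ell\cdot x=c$ to the sub-faces of $F(t)$ on which one of $a,b$ is frozen; by Proposition~\ref{t:cladeface} these are clade-faces, combinatorially $\p_{m+1}$ and $\p_{k+1}$. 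Freezing $b$ and using the coordinate formulas, the equation becomes a linear equation holding on every vertex of $\p_{m+1}$, hence a combination of that polytope's Kraft equalities; reading off coefficients pins down $\ell_{ij}$ for $i,j\in S_1$ up to an additive \emph{potential} on $S_1$ (independent of $b$) and writes the cut rows $(\ell_{ij})_{j\in S_2}$ in terms of the profile $\big(x^{(k+1)}_{j\ast'}(b)\big)_{j\in S_2}$. Letting $b$ vary and using that these profiles affinely span the hyperplane $\{\sum_j y_j=2^{k-1}\}$ of $\R^{S_2}$ — equivalently, the only equality of $\p_{k+1}$ involving solely the $\ast'$-coordinates is $\mathrm{Kraft}^{(k+1)}_{\ast'}$, which again needs $|S_2|\ge 3$ — I would deduce that $\ell_{ij}-\ell_{i'j}$ is independent of $j$; the symmetric argument gives that $\ell_{ij}-\ell_{ij'}$ is independent of $i$; and these two facts together force $\ell$ on the cut pairs to have the shape $\lambda_i+\lambda_j+w$. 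Assembling the three blocks of coefficients, $\ell=\sum_{p\in[n]}\lambda_p\,\mathrm{Kraft}^{(n)}_p+wC$, so $\dim F(t)=\dim\p_n-1$ and $F(t)$ is a facet.

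I expect the last part of the dimension count to be the real obstacle: disentangling the cut coefficients $\ell_{ij}$ with $i\in S_1$, $j\in S_2$. The within-block coefficients reduce painlessly to the known fact that $\p_{m+1}$ and $\p_{k+1}$ are full-dimensional inside their Kraft hyperplanes, but the cut coefficients are governed by the bilinear term $x^{(m+1)}_{i\ast}(a)\,x^{(k+1)}_{j\ast'}(b)$, so one must isolate the dependence on the half-edge profiles and verify that moving $a$ and $b$ simultaneously produces no further relations. This is exactly the step that uses $m,k\ge 3$, i.e.\ $n>5$: when a block has size $2$ — a genuine cherry — the potential system degenerates, $\p_{m+1}$ or $\p_{k+1}$ collapses to a point, and the tree-face $F(t)$ drops to one of the non-facet clade-faces of Proposition~\ref{t:cladeface}; so the hypothesis is essential. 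A final routine check is that $F(t)$ is a proper nonempty face and that the right-hand constant is the one forced by the Kraft equalities and $C$, both immediate from Theorem~\ref{t:faces}.
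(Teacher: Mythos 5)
Your proposal is correct, but it follows a genuinely different route from the paper's. The paper first proves the split inequality $\sum_{i<j,\ i,j\in S_1}x_{ij}\le (k-1)2^{n-3}$ (equality exactly on trees displaying the split), and then establishes codimension one by an induction on the size of the smaller block: an explicit flag of faces is built inside the split-face for the $(m,3)$ base case using a list of auxiliary inequalities (which need not be face inequalities of the whole polytope), and the inductive step concatenates a chain of length $n-2$ with a flag coming from the corresponding facet of $\p_{n-1}$ inside a cherry clade-face. You instead compute the affine hull of the split-face directly: the key is the factorization $x_{ij}(a\mid b)=x^{(m+1)}_{i\ast}(a)\,x^{(k+1)}_{j\ast'}(b)$ for cross pairs together with the scaling formulas within blocks (all of which check out), which lets you freeze one side of the split, invoke the known fact that the only affine relations on $\p_{m+1}$ and $\p_{k+1}$ are their Kraft equalities, and conclude that every linear relation on the split-face lies in the span of the $n$ Kraft functionals and the split functional $C$ --- hence codimension exactly one, with no induction and no auxiliary flag. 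Both arguments use $m,k\ge 3$ at the same pressure point (your ``profile spanning'' step, the paper's use of intersecting-cherry-type inequalities and the $\p_{n-1}$ facet). One step you leave implicit deserves care in a write-up: the two difference statements ($\ell_{ij}-\ell_{i'j}$ independent of $j$, $\ell_{ij}-\ell_{ij'}$ independent of $i$) only give $\ell_{ij}=f(i)+g(j)$ on cross pairs, and you must still identify $f$ and $g$ (up to a common constant $w$) with the within-block potentials $\alpha_i/2^{k-1}$ and $\beta_j/2^{m-1}$ before you can write $\ell=\sum_p\lambda_p K_p+wC$. This does follow from your own equations: keeping the un-differenced relation $\sum_{j\in S_2}\ell_{ij}x^{(k+1)}_{j\ast'}(b)=\alpha_i+\alpha_\ast(b)$ and using $\sum_j x^{(k+1)}_{j\ast'}(b)=2^{k-1}$ gives $f(i)-f(i')=(\alpha_i-\alpha_{i'})/2^{k-1}$, which is exactly the needed matching, and symmetrically for $g$. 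The trade-off between the two proofs: the paper's flag construction is self-contained but long and delicate to verify inequality by inequality; your argument is shorter and more structural, but leans on the previously established dimension formula $\dim\p_N=\binom{N}{2}-N$ applied to $\p_{m+1}$ and $\p_{k+1}$ (a fact the paper also quotes), and it yields the facet property without producing the explicit subface chain that the paper's method incidentally provides.
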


The proof is in Section 6. Note that this implies that there are
clade-faces which are not contained in any tree-face facet, as seen
in Fig.~\ref{lattice}.

\begin{figure}[b!]\centering
                  \includegraphics[width=5in]{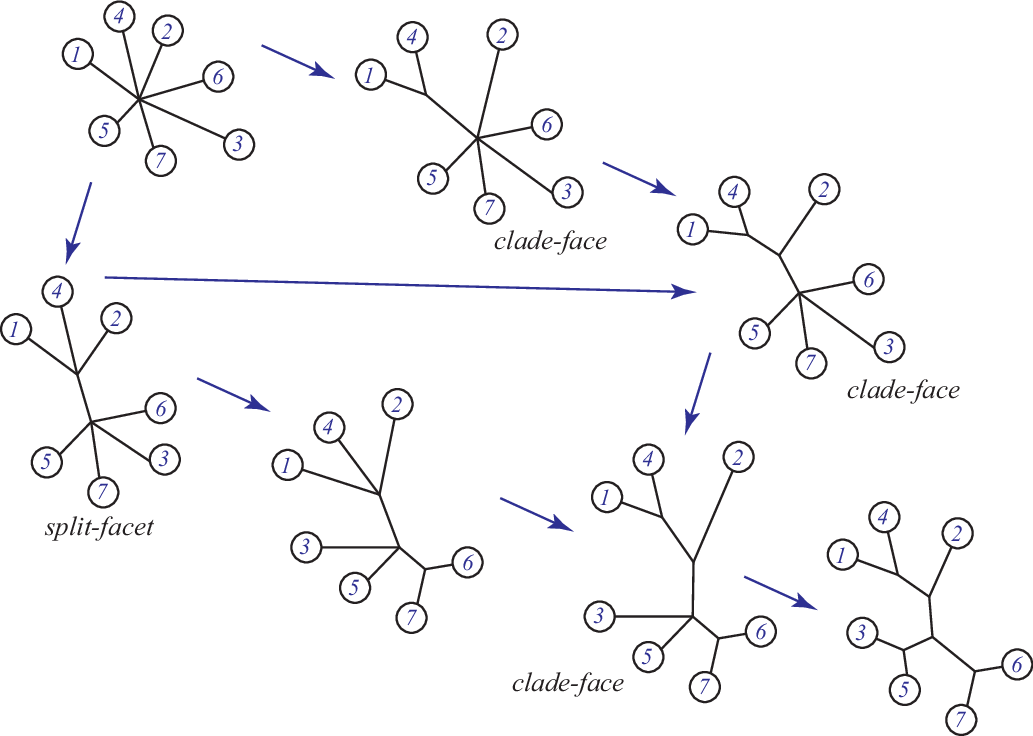}
\caption{Examples of chains in the lattice of tree-faces of the BME
polytope $\p_7.$ }\label{lattice}
\end{figure}

It is clarifying to refer to the new family of facets in
Theorem~\ref{t:splitfacet} as \emph{split-facets}. The binary
phylogenetic trees which display a given split correspond precisely
to the trees which refine a tree as described in that theorem.

In fact we can see all the tree-faces in terms of displayed splits,
since a split always corresponds to an internal edge. Thus  we have
that requiring 2 or more splits which the binary trees must all
display simultaneously corresponds to specifying a tree-face, all of
which are subfaces of split-facets.

\section{Enumeration}

\subsection{Number of split-facets.}
For $n=6$ there are 31 splits in all, but only $10$ splits which
obey the requirement that there are at least three leaves in each
part. For $n$ leaves the number of splits is $2^{(n-1)} -1.$ (This
is half the number of nontrivial, proper subsets.) Discarding the
splits with only one leaf and discarding the cherry clade-faces, we
are left with:
$$2^{(n-1)}-{n \choose 2}-n-1$$ split facets.

\subsection{Number of vertices in a split-facet.}
For $n=6$ each facet of this type has 9 vertices since there are
three choices of binary structure on each side of the split. Thus
the facet itself must be an $8$-dimensional simplex.

 We also found a formula for
the number of vertices in a split-face with parts of the split being
$S_1$ of size $k$ and $S_2$ of size $m= n-k$. The number of vertices
is:
$$(2m-3)!!(2k-3)!!~.$$ This formula is found via the multiplication
principle, in which all possible clades are counted for each part of
the split.

\subsection{Number of facets that a given tree belongs to, in the
Splitohedron.}

 The split-faces, intersecting-cherry facets,
and caterpillar facets together outline a relaxation of the BME
polytope. We define a new polytope:
\begin{defi}
The \emph{splitohedron} $Sp(n)$ is defined as the intersection of
the half-spaces of $\R^{n \choose 2}$ given by the following
inequalities listed by name: the intersecting-cherry facets, the
split-facets, the caterpillar facets and the cherry clade-faces--
and also obeying the $n$ Kraft equalities.
\end{defi}

The splitohedron is a bounded polytope because the cherry
clade-faces, where the inequality is $x_{ij} \le 2^{n-3}$, and the
caterpillar facets, where the inequality is $x_{ij} \ge 1$, show
that it lies inside the hypercube $[1,2^{n-3}]^{n \choose 2}$. It
has the same dimension as the BME polytope, and often has many of
the same vertices.

\begin{theorem} For an $n$-leaved binary phylogenetic tree, if the number of
cherries is at least $n/4$ then the tree represents a vertex in the
BME polytope that is also a vertex of the splitohedron. For $n\le
11$ the tree represents a vertex regardless of the number of
cherries.
\end{theorem}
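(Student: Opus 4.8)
The plan is to show that every vertex $\mathbf{x}(T)$ of the splitohedron satisfies strict inequality in \emph{all} of the defining half-spaces that are not equalities (i.e.\ is a vertex of $Sp(n)$) precisely when $T$ has enough cherries, by checking each family of facet inequalities against $\mathbf{x}(T)$ in turn. Since $Sp(n)$ is a relaxation of $\p_n$ and shares the Kraft equalities, and since $\mathbf{x}(T)$ is already a vertex of $\p_n$, it suffices to verify that $\mathbf{x}(T)$ lies \emph{on the boundary} of $Sp(n)$ via a set of facet-defining inequalities whose normals span the $\binom{n}{2}-n$ dimensional space; equivalently, one exhibits a linear functional uniquely minimized at $\mathbf{x}(T)$ among all points of $Sp(n)$. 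The most transparent route is: (i) confirm $\mathbf{x}(T) \in Sp(n)$ (it satisfies every listed inequality since $T$ is a binary tree displaying the relevant splits/cherries or not); (ii) identify, for the given $T$, a collection of the splitohedron's facet inequalities that are \emph{tight} at $\mathbf{x}(T)$; (iii) show that collection has full rank.

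First I would handle the caterpillar inequalities $x_{ij}\ge 1$: these are tight at $\mathbf{x}(T)$ exactly for the pairs $\{i,j\}$ that are maximally far apart in $T$, i.e.\ realize $l_{ij}(T)=n-2$. Next the cherry clade-face inequalities $x_{ij}\le 2^{n-3}$: these are tight exactly when $\{i,j\}$ is a cherry of $T$ (so $l_{ij}(T)=1$, giving $x_{ij}=2^{n-3}$). Then the intersecting-cherry inequalities $x_{ab}+x_{bc}-x_{ac}\le 2^{n-3}$: one checks that equality holds for $\mathbf{x}(T)$ precisely when $\{a,b\}$ is a cherry of $T$ and $c$ is attached so that the path from $b$ to $c$ and from $a$ to $c$ differ by exactly the one node of that cherry --- this happens whenever $\{a,b\}$ is a cherry and $c$ is any other leaf, using the identity $l_{ac}(T)=l_{bc}(T)=l_{ab}(T)+l_{bc}(T)-1$ type relation; more carefully, if $\{a,b\}$ is a cherry then $l_{ac}=l_{bc}$ for every $c\notin\{a,b\}$, and $x_{ab}+x_{bc}-x_{ac}=2^{n-3}+x_{bc}-x_{bc}=2^{n-3}$. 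So every cherry $\{a,b\}$ of $T$ contributes $n-2$ tight intersecting-cherry inequalities (one per remaining leaf) plus one tight cherry clade-face inequality. Finally the split-facets $\sum_{i,j\in S_1}x_{ij}\le (k-1)2^{n-3}$ are tight exactly for the splits $T$ displays.

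The key combinatorial claim is then: the normals of the tight inequalities coming from the cherries of $T$ (together with the tight caterpillar and split inequalities) span a space complementary to the Kraft equalities' span. The heuristic "number of cherries $\ge n/4$'' should enter here: each cherry $\{a,b\}$ of $T$ provides, via its family of tight intersecting-cherry inequalities as $c$ ranges over $[n]\setminus\{a,b\}$ together with the clade-face $x_{ab}\le 2^{n-3}$, a block of roughly $n-1$ independent normals "localized'' at coordinates touching $a$ or $b$; with $\ge n/4$ cherries these blocks jointly touch enough coordinate directions to achieve rank $\binom{n}{2}-n$. I would make this precise by writing the candidate tight system as a matrix, grouping rows by cherry, and performing a rank computation --- arguing, for instance, that the differences of intersecting-cherry normals within one cherry block recover all $x_{bc}-x_{ac}$ directions, and that across $\ge n/4$ disjoint cherries these differences, plus the clade-face rows and the split/caterpillar rows, leave only the $n$-dimensional Kraft kernel. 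For the stated range $n\le 11$ the claim "regardless of the number of cherries'' should be verified by direct computation (every binary tree on $\le 11$ leaves has at least one cherry --- in fact at least two --- and a finite check, or the observation that the minimum cherry count $2$ already exceeds $n/4$ for $n\le 8$ and a short separate argument or computation covers $n=9,10,11$).

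The hard part will be step (iii), the rank/independence argument: establishing that the tight facet normals at $\mathbf{x}(T)$ actually span the full $(\binom{n}{2}-n)$-dimensional ambient space (modulo Kraft), rather than merely producing many tight inequalities. In particular one must rule out the possibility that a tree with exactly the threshold number of cherries but an awkward shape (e.g.\ a near-caterpillar with its cherries clustered) fails to have full-rank tight system --- this is presumably why the bound is $n/4$ rather than a constant, and why the small-$n$ cases are carved out separately. I expect the cleanest argument chooses, for each cherry, \emph{one} well-chosen auxiliary leaf $c$ to get a sparse normal, reducing the rank question to a bipartite-type incidence argument between cherries and coordinates; bounding below the rank of that incidence structure by $\binom n2 - n$ when there are at least $n/4$ cherries is the crux, and I would prove it by an explicit elimination order rather than an eigenvalue or generic-position argument.
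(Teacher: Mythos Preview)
Your approach is more elaborate than the paper's. The paper simply \emph{counts} the splitohedron facets containing $\mathbf{x}(T)$ and compares that count to the dimension $\binom{n}{2}-n$. Each cherry $\{a,b\}$ of $T$ yields $2(n-2)$ tight intersecting-cherry facets---you found only $n-2$, missing that either $a$ or $b$ can serve as the shared leaf, so for each third leaf $c$ both $x_{ab}+x_{bc}-x_{ac}\le 2^{n-3}$ and $x_{ab}+x_{ac}-x_{bc}\le 2^{n-3}$ are tight. Each of the $n-3-p$ internal edges not bounding a cherry contributes one tight split-facet, and a caterpillar picks up $4$ additional caterpillar facets. The total $(2n-5)p+n-3$ (plus $4$ for caterpillars) is then compared to $\frac{1}{2}(n^2-3n)$; the resulting inequality $p\ge\frac{n^2-5n+6}{4n-10}$ is implied by $p\ge n/4$, and substituting the minimum values $p=2$ (caterpillars) and $p=3$ (non-caterpillars) handles $n\le 11$ directly.

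The paper does \emph{not} carry out your step~(iii): it never checks that the tight facet normals are linearly independent, treating ``lies on at least $d$ facets'' as sufficient for ``is a vertex.'' So your proposed rank argument would in fact fill a gap the paper leaves open. Conversely, your proposal does not yet derive the threshold $n/4$; once you correct the intersecting-cherry count to $2(n-2)$ per cherry, the same facet-counting arithmetic the paper uses would hand you that bound, and the block/elimination scheme you sketch is unnecessary for matching the paper's level of rigor.
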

\begin{proof}
For a given binary phylogenetic tree $t$ it is straightforward to
count how many distinct facets of the splitohedron it belongs to. If
that number is as large as the dimension, we know that the tree lies
at a vertex of the polytope $Sp(n)$. First we note that an
inequality which defines a facet of the BME polytope and which is
also obeyed by the splitohedron therefore defines a facet of the
splitohedron as well, by the nature of relaxation.

For each cherry $\{a,b\}$ of $t$ we have that $t$ lies within
$2(n-2)$ facets, an intersecting-cherry facet for each choice of
either $a$ or $b$ and a third leaf that is neither. For each
interior edge that does not determine a cherry clade, we have that
$t$ lies within a split-facet. There are $n-3-p$ such interior
edges, where $p$ is the number of cherries. Finally, if $t$ is a
caterpillar then it lies within 4 caterpillar facets, determined by
a choice of one leaf from each cherry to fix.

All together $t$ lies within $p(2n-4) + n - 3 - p =(2n-5)p + n -3$
facets of the splitohedron, if it is not a caterpillar. For any $n$
this number increases with $p$, as $p$ ranges from 2 to $\left
\lfloor{\frac{n}{2}}\right \rfloor.$ The dimension of the polytope
is ${n \choose 2} -n = \frac{1}{2}(n^2-3n).$  Comparing the two
expressions shows that the tree $t$ will represent a vertex of
$Sp(n)$ as long as $p \ge \frac{n^2-5n+6}{4n-10}. $ This is true,
for instance, when $p \ge n/4.$

In the worst case scenario for non-caterpillar trees, we have $p=3$
and $t$ is a vertex when $n^2 \le 17n -36,$ or for $n \le 14.$ For
caterpillar trees, where $p=2$, we have the extra four facets so $t$
is a vertex when $n^2 \le 13n -22,$ or for $n \le 11.$

Thus for $n \le 11$ we have all the binary phylogenetic trees
represented as vertices of the splitohedron. \qed \end{proof}

\section{Proof of Theorem~\ref{t:splitfacet}}

First we prove that the split-facet is always a face of the BME
polytope. This is implied by Theorem~\ref{t:faces}. However it is
more useful to prove the following simpler linear inequality.
\begin{lemma}Consider the split $\pi = \{S_1, S_2\}$ of the set of leaves.
Let $|S_1| = k \ge 3$ and $|S_2| = m \ge 3.$ Then the following
inequality becomes an equality precisely for the trees which display
the split, and a strict inequality for all others.
$$\sum_{\text {$i<j$,
 leaves $i,j\in S_1$}} x_{ij} \le (k-1)2^{n-3}.$$
\end{lemma}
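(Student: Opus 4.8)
The plan is to prove the (slightly more general) statement in which we only assume $k\ge 2$ and $m\ge 2$, with the convention that a cherry counts as a displayed clade; the lemma as stated is then the special case $k,m\ge 3$. I will use repeatedly that $x_{ij}(t)=2^{\,n-2-l_{ij}(t)}$ and the Kraft equalities $\sum_{j\ne i}x_{ij}(t)=2^{n-2}$. It is worth noting that summing the Kraft equalities over $i\in S_1$ and collecting terms gives $\sum_{\{i,j\}\subseteq S_1}x_{ij}(t)=k\,2^{n-3}-\frac12\sum_{i\in S_1,\,j\in S_2}x_{ij}(t)$, so the asserted inequality is equivalent to the symmetric one $\sum_{i\in S_1,\,j\in S_2}x_{ij}(t)\ge 2^{n-2}$, again with equality precisely for the trees displaying $\pi$; I would keep this in mind but work with the original sum.

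For the equality direction there is a clean direct argument. Suppose $t$ displays $\pi$; let $C$ be the clade with leaf set $S_1$, attached to the rest of $t$ along an internal edge $e$, and let $C^{+}$ be the binary tree on $S_1\cup\{\ell\}$ obtained by replacing $e$ with a pendant edge to a new leaf $\ell$. For any $i,j\in S_1$ the path from $i$ to $j$, and the set of internal nodes on it, is the same in $t$ and in $C^{+}$, so $x_{ij}(t)=2^{\,m-1}\,y_{ij}(C^{+})$, where $y$ denotes the scaled coordinates on $\mathcal{P}_{k+1}$. The Kraft equalities on $\mathcal{P}_{k+1}$ give $\sum_{\text{all pairs}}y_{ij}(C^{+})=(k+1)2^{k-2}$ and $\sum_{i\in S_1}y_{i\ell}(C^{+})=2^{k-1}$, hence $\sum_{\{i,j\}\subseteq S_1}y_{ij}(C^{+})=(k-1)2^{k-2}$, and therefore $\sum_{\{i,j\}\subseteq S_1}x_{ij}(t)=(k-1)2^{n-3}$.

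For the inequality, together with strictness for every tree not displaying $\pi$, I would induct on $n$, carrying the full biconditional. If $k=2$, write $S_1=\{a,b\}$: the sum is just $x_{ab}=2^{\,n-2-l_{ab}}\le 2^{n-3}$, with equality iff $l_{ab}=1$, i.e.\ iff $\{a,b\}$ is a cherry of $t$, i.e.\ iff $\pi$ is displayed. If $m=2$, write $S_2=\{c,d\}$: using the Kraft equalities one rewrites $\sum_{\{i,j\}\subseteq S_1}x_{ij}(t)=(n-4)2^{n-3}+x_{cd}$, and the same conclusion follows from $x_{cd}\le 2^{n-3}$. Otherwise $k,m\ge 3$, and I pick a cherry $\{a,b\}$ of $t$ and delete one of its leaves, obtaining a tree $t'$ on $n-1$ leaves with a still-admissible split. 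If $\{a,b\}\subseteq S_1$, delete $b$ (the split becoming $\{S_1\setminus\{b\},S_2\}$); tracking how the coordinates change under the deletion one gets $\sum_{\{i,j\}\subseteq S_1}x_{ij}(t)=2^{n-3}+2\sum_{\{i,j\}\subseteq S_1\setminus\{b\}}x'_{ij}(t')$, so the inductive bound gives $\le(k-1)2^{n-3}$, with equality iff $t'$ displays $\{S_1\setminus\{b\},S_2\}$ — equivalently iff $t$ displays $\pi$, since inserting or deleting a leaf inside a cherry of the $S_1$-clade leaves that clade intact. The case $\{a,b\}\subseteq S_2$ is similar but easier, the $S_1$-pairs merely being rescaled by $2$. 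If the cherry straddles, say $a\in S_1$ and $b\in S_2$ (so $t$ cannot display $\pi$), I delete the $S_2$-side leaf $b$; then $\sum_{\{i,j\}\subseteq S_1}x_{ij}(t)=2\sum_{\{i,j\}\subseteq S_1}x'_{ij}(t')-\sum_{j\in S_1\setminus\{a\}}x'_{aj}(t')$, where the first term is $\le(k-1)2^{n-3}$ by induction and the subtracted sum is strictly positive, so the inequality is strict, as required.

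The one place that needs genuine care is the straddling-cherry case: the leaf deleted must be the one on the $S_2$ side, since deleting the $S_1$-side leaf only yields a weak bound, and one must check that the contracted split still has both parts of size at least $2$ — which is precisely why it pays to carry the generalized hypothesis $k,m\ge 2$. The other point, routine but worth stating, is the combinatorial fact that the cherry-contraction $t'$ displays the contracted split if and only if $t$ displays $\pi$; this is what lets the ``equality $\iff$ $t$ displays $\pi$'' biconditional propagate through the induction.
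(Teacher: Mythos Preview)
Your proof is correct. The equality direction is essentially the paper's own argument: both collapse $S_2$ to a single leaf $\ell$, work in $\mathcal{P}_{k+1}$, and extract $\sum_{\{i,j\}\subseteq S_1}y_{ij}=(k-1)2^{k-2}$ from the Kraft equalities before scaling back by $2^{m-1}$. For the strict inequality, however, you take a genuinely different route. The paper argues informally that if $t$ does not display $\pi$ then the leaves of $S_1$ are scattered across several clades, hence ``further apart,'' hence the sum is strictly smaller --- intuitively clear, but not spelled out as a proof. Your cherry-deletion induction, by contrast, is fully rigorous: the three cases (cherry inside $S_1$, inside $S_2$, or straddling) are handled with explicit coordinate identities, and carrying the generalized hypothesis $k,m\ge 2$ is exactly what makes the induction close. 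The payoff of your approach is a complete argument with the equality characterization built in at every step; the cost is more bookkeeping than the paper's one-line heuristic. Your observation that the inequality is equivalent via Kraft to $\sum_{i\in S_1,j\in S_2}x_{ij}\ge 2^{n-2}$ is also a nice addition not made explicit in the paper.
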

\begin{proof}~(of the face inequality.) It follows directly from the fact that the sum of all coordinates for
 any tree with $n$ leaves is $n2^{n-3}.$ Thus, if we double-sum over the leaves,
  we have $\displaystyle{\sum_{i}(\sum_{j}x_{ij}) = n2^{n-2}}$; twice the total since
   we add each coordinate twice. Now consider a tree with $k+1$ leaves (anticipating a clade with $k$ leaves)
    and the double sum is $(k+1)2^{k-1}.$
 If we only sum over the first $k$ leaves, thereby ignoring all the coordinates involving
  the $k+1$st leaf, the smaller double sum totals to  $(k-1)2^{k-1}$. (Note that the additional
  internal node connecting to the $k+1$st leaf is causing the perceived difference in
  results for our clade of $k$ leaves from an entire tree of $n$ leaves. )
  Next consider the actual situation of interest, where
   there is a clade of size $k$ whose coordinates we double-sum over,
     but we have replaced the extra leaf with another clade of size $n-k$ . Here each
     coordinate in the double sum is multiplied by the power of 2 achieved by
      adding $n-k-1$ leaves, so our total becomes $$2^{n-k-1}(k-1)2^{k-1} = (k-1)2^{n-2}.$$

 Recall that we have been double counting, so our result is 2 times too much:
  the actual sum of the coordinates in any clade with leaves from $S_1$ is $$\sum_{\text {$i<j$,
 leaves $i,j\in S_1$}} x_{ij} =
(k-1)2^{n-3}.$$

  It is clear that for any tree which does not contain a clade consisting
   only of the leaves in $S_1,$ it instead must contain a collection of
    clades whose leaves together make up the set $S_1$ (some of which may be singletons.)
     Since some of these must be further apart (separated by more internal nodes from each other) than if they formed a single clade, then
     summing all the coordinates using indices only from $S_1$  will give a total strictly smaller
      than in the case where $S_1$ makes up the leaves of a single clade.  \qed \end{proof}

\begin{figure}[b!]\centering
                  \includegraphics[width=\textwidth]{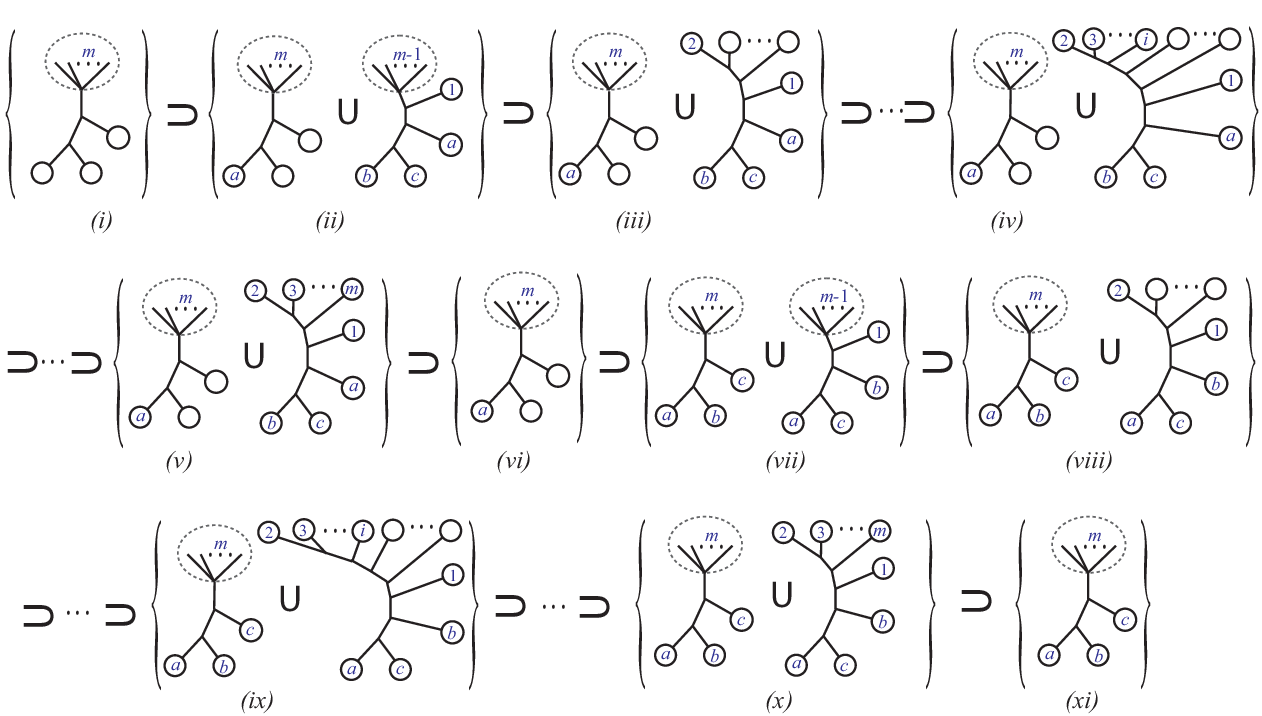}
\caption{Flag for the base case in proof of
Theorem~\ref{t:splitfacet}. The sets include all the trees that can
be formed by completing the pictures with additional leaf labels.
Dashed-circled corollas denote all possible binary structures on the
leaves (which are not always shown). Dots between labeled leaves
denote an ordered caterpillar structure, while dots between
unlabeled leaves denote an unordered caterpillar. }\label{flag_base}
\end{figure}

\begin{figure}[b!]\centering
                  \includegraphics[width=\textwidth]{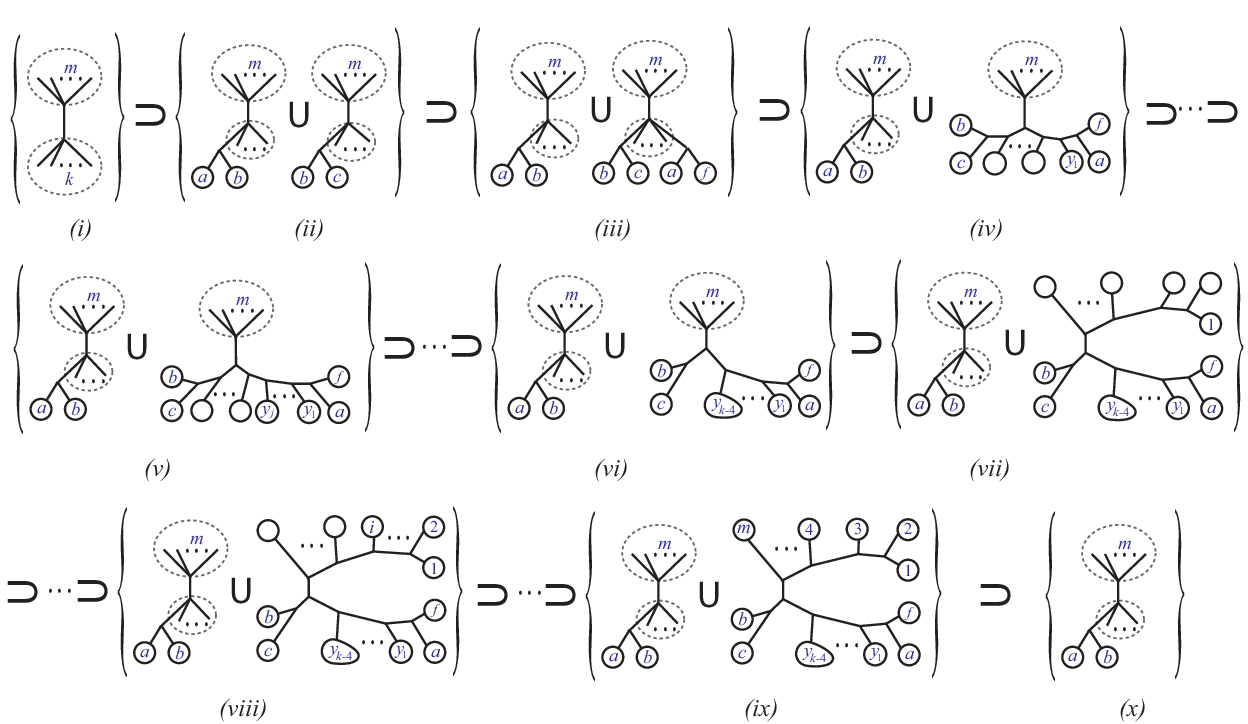}
\caption{Flag for the inductive step in proof of
Theorem~\ref{t:splitfacet}. Picture notation is as above.
}\label{flag_induct}
\end{figure}

Notice that using the second part of the split, $S_2,$ as the basis
for the sum works just as well. In practice the smaller part of the
split is chosen in order to provide a shorter inequality. Now we
prove the dimension of these faces.
\begin{proof}~of Theorem~\ref{t:splitfacet}:
\emph{Base case.} The proof is inductive. We start by proving the base case in
which one of the parts of the split has exactly $k=3$ leaves, and the other has
size $m\ge 3$.

 To do this, we
fill in the flag which goes from this facet down to the clade face
for a fixed combination of the $3$-leaved section of the split.

The first inequality is that of the facet itself, where we simply
have a split. If we label the leaves in our $k=3$-leaf section
$a$,$b$,$c$; then our simplified inequality from above is
$x_{a,b}+x_{a,c}+x_{b,c} \leq 2^{n-2}$. Let the leaves in the
$m$-leaf section be labeled as $1,2,...,m$. We now rely on the fact
that to show a chain of subfaces, our subsequent face inequalities
only need to be strict on trees  which obey the previous face
inequality exactly, as an equality. This raises a caveat: the
inequalities used for subfaces of the flag in our proof may not be
actual face inequalities of the entire polytope.

Our next inequality is: $$3x_{a,1}-x_{b,1}-x_{c,1}+2x_{a,b}+2x_{a,c}
\leq 3\cdot 2^{n-3}.$$ This is intended to include all trees with
$a$ in a cherry, and to require the leaf $1$ to be near the leaf $a$
when $a$ is not in the cherry. See the set pictured in part $(ii)$
of Fig.~\ref{flag_base}.

In the case when $a$ is in the cherry,  $x_{b,1}$ or $ x_{c,1}$ will
be the size of $x_{a,1}$ and the other will be twice its size. So
the sum $3x_{a,1}-x_{b,1}-x_{c,1}$ will be 0. Then, $x_{a,b}$ or
$x_{a,c}$ must be $2^{n-3}$ and the other $2^{n-4}$. These add to
$3\cdot 2^{n-4}$. So $2x_{a,b}+2x_{a,c} = 3\cdot 2^{n-3}$.

When $a$ is not in the cherry, for out inequality to be maximal we
must have $x_{a,1}=2^{n-4}$ and hence $x_{b,1}$ and $x_{c,1}$ as
$2^{n-5}$. So $3x_{a,1}-x_{b,1}-x_{c,1}=3\cdot 2^{n-4}-2\cdot
2^{n-5}=2^{n-3}$. Then, since $a$ is near $b$ and $c$ but not in the
cherry, we have $2x_{a,b}+2x_{a,c}=2\cdot 2^{n-3}$. So, the left
hand side of our equation is $3\cdot 2^{n-3}$ when $1$ is close to
$a$, as wanted. If $1$ were to be further, it is easy to see the
expression would be smaller.

Our next set of steps is dependent upon the size of $m$. The intent
here is to build off of previous steps by forcing specific leaves to
be far from the $k$-leaf cluster in each step. See the sets pictured
in parts $(iii)-(v)$ of Fig.~\ref{flag_base}. Our inequalities will
be:
$$3x_{a,i}-x_{b,i}-x_{c,i}+\dfrac{2^{i-1}}{2^{n-4}}(x_{a,b}+x_{a,c}) \geq
3\cdot 2^{i-1}$$ when $i\geq3.$ When $i=2$, we use the inequality:
$$3x_{a,2}-x_{b,2}-x_{c,2}+\dfrac{2^{3-1}}{2^{n-4}}(x_{a,b}+x_{a,c}) \geq
3\cdot 2^{3-1}.$$ This is because $2$ is in a cherry with $3$ so
they must satisfy the same inequality, albeit with different
coordinates.

This works since $3x_{a,i}-x_{b,i}-x_{c,i}$ is $0$ when $a$ is in
the cherry, and it is half the size of
$\dfrac{2^{i-1}}{2^{n-4}}(x_{a,b}+x_{a,c})$ when it is not in the
cherry. Also, $\dfrac{2^{i-1}}{2^{n-4}}(x_{a,b}+x_{a,c})$ is
$\frac{3}{2}$ the size when $a$ is in the cherry as when $a$ is not
in the cherry. So in both cases, when we have what we want, we have
equality. If the leaf $i$ moves at all when $a$ is in the cherry, we
still have equality. If it moves when $a$ is not in the cherry,
$3x_{a,i}-x_{b,i}-x_{c,i}$ will become larger.

After this chain, we have a simple inequality which forces $a$ to be
in the cherry, as in the set pictured in part $(vi)$ of
Fig.~\ref{flag_base}. It looks like: $$2x_{a,b}+2x_{a,c}\leq 3\cdot
2^{n-3}.$$

Next, for the set pictured in part $(vii)$ of Fig.~\ref{flag_base},
we have $3x_{b,1}-x_{a,1}-x_{c,1}+2x_{a,b}+2x_{b,c} \leq 3\cdot
2^{n-3}$. This works like the inequality for the face below the
facet. This meaning that, it forces $1$ to be close to $b$ when $b$
is not in the cherry, and has no effect on the tree when $b$ is in
the cherry. We then have the same $i$-indexed chain after it with
the roles of $a$ and $b$ reversed, since we are trying to achieve
the same result as with $a$ but with $b$. See the sets pictured in
part $(viii) - (x)$ of Fig.~\ref{flag_base}. So, the inequalities
are:
$$3x_{b,i}-x_{a,i}-x_{c,i}+\dfrac{2^{i-1}}{2^{n-4}}(x_{b,a}+x_{b,c}) \geq
3\cdot 2^{i-1}$$ when $i\geq3$ and when $i=2$,
$$3x_{b,2}-x_{a,2}-x_{c,2}+\dfrac{2^{3-1}}{2^{n-4}}(x_{b,a}+x_{b,c}) \geq
3\cdot 2^{3-1}.$$

To finish, we  use the fixed clade face of dimension
$\binom{m+1}{2}-(m+1)$ as described in \cite{Rudy} where $c$ is not
in the cherry. See the set pictured in part $(xi)$ of
Fig.~\ref{flag_base}. The total length of our chain is
$\binom{n}{2}-n-1$, proving that the (m,3)-split face is a facet.

\emph{Inductive step.} Next we assume the theorem for splits of respective
sizes $k-1$ and $m,$ both larger than 3, and inductively prove it for all
$k,m.$ We consider all the trees which display a given split $\pi$ into leaves
$S_1=\{1,\dots,m\}$ and leaves $S_2=\{a,b,c,f\}\cup\{y_1,\dots,y_{k-4}\}.$

The inductive assumption allows us to use Theorem~\ref{t:cladeface}
in our proof. We can calculate the dimension of a face which has as
its vertices all the binary phylogenetic trees that both display the
split $\pi$ and also have a cherry $\{a,b\}.$ These trees are a
subset of the set of all the trees with the cherry $\{a,b\}$, which
describes a clade-face of the BME polytope. That clade-face is
equivalent to $\p_{n-1},$ and using the argument of the proof of
Theorem~\ref{t:cladeface} as found in \cite{Rudy}, the cherry can be
considered as a leaf of the trees of $\p_{n-1}.$ Thus the trees that
both display our split and also have a cherry $\{a,b\}$ display a
split $\pi'$ into $m$ and $k-1$ ``leaves'' which gives, by
induction, a facet of $\p_{n-1}.$ The dimension of this face is thus
${n-1 \choose 2} -(n-1) -1,$ and so it is the top-dimensional face
in a flag of length ${n-1 \choose 2} -(n-1).$

Next we show the existence of a chain of faces of length $n-2,$
beginning with the face of all trees that display our split $\pi$
and ending with the face that has all trees displaying $\pi$ and
possessing the cherry $\{a,b\}.$ Concatenating this chain to the
flag shown by induction gives a flag of length ${n \choose 2} -n$,
which implies that our split-face is indeed a facet.

After the split face, the second face in our flag is described by
all the trees that both display the split $\pi$ and possess either
cherry $\{a,b\}$ or cherry $\{b,c\}.$ These trees, as a sub-face of
the split face, have the face inequality: $$x_{ab}+x_{bc}-x_{ac} \le
2^{(n-3)}.$$ Note that this is a face by virtue of being the
intersection of the split-facet and the intersecting-cherry facet:
in fact the proof from here is inspired by the proof of Theorem 4
(the intersecting-cherry facet) in \cite{forcey2015facets}. Indeed
the next face in our flag is described by containing the trees which
both display the split $\pi$ and possess either cherry $\{a,b\}$ or
the two cherries $\{b,c\}$ and $\{a,f\}.$ Again this is an
intersection of faces: the split-face and the second face of the
flag shown in the proof of Theorem 4 in \cite{forcey2015facets}. For
completeness, the inequality obeyed by this third face is:
$$x_{bc}+x_{bf}-x_{ac} -x_{af} \ge 0.$$

Next we have a chain of $k-4$ faces which correspond to ordering the
remaining $k-4$ leaves of $S_2.$ For $j\in 1\dots k-4$ we take the
set of trees that have the split $\pi$ and the cherry $\{a,b\}$, or
which have the cherries $\{b,c\}$ and $\{a,f\}$ as the two cherries
of a caterpillar clade made from $S_2,$ and for which the leaves
$y_1 \dots y_j$ are attached in that order starting as close as
possible to the cherry $\{a,f\}.$ See the pictures of sets $(iv)$ -
$(vi)$ in Fig.~\ref{flag_induct}, noting how the caterpillar clade
is attached to $S_1$ at any point among its unordered nodes. The
$j^{th}$ term in this list of faces obeys the inequality:
$$(2^{n-3}-2^{m-1})(x_{ay_j}-x_{by_j})\le(2^{n-3}-x_{ab})(2^{n-3-j}-2^{m+j-1}).$$
To see that this is an equality for the sets of trees in question,
note first that when $\{a,b\}$ is a cherry then $x_{ab} = 2^{n-3}$
and $x_{ay_j}=x_{by_j}.$ Also, when $S_2$ is fixed as a caterpillar
clade, then $x_{ab} = 2^{m-1} $ and $x_{ay_j}-x_{by_j} =
2^{n-3-j}-2^{m+j-1}.$ Finally, when $y_j$ is found in a location on
the caterpillar clade closer to leaf $b,$ (which is the only way to
be in the previous face while avoiding being in the current face),
then $x_{ay_j}-x_{by_j}$ is forced to be a lesser value.

After the chain of caterpillar clades using $S_2$, we add a chain
using caterpillar clades on $S_1.$ This chain begins with the set
pictured in part $(viii)$ of Fig.~\ref{flag_induct}, where the leaf
1 is in the cherry at the far end of the flag.  This face obeys the
equality:
$$(2^{n-3}-2^{m-1})(x_{a1}-x_{b1})\le(2^{n-3}-x_{ab})(2-2^{k-3}).$$
The comments just made about the previous faces also apply here, to
show that the equality holds on the face and that when $S_2$ is
fixed as a caterpillar clade, then $x_{ab} = 2^{m-1}$. Now though we
see that if the leaf 1 is any closer to $b,$ then both $x_{a1}$ and
$x_{b1}$ increase. However, since they are both powers of two then
increasing both by a factor of another power of two means their
difference will be even larger--and we are subtracting in the order
that ensures the inequality.

The remaining links in the chain are formed by fixing the leaves
$2,\dots,m$ in order along the caterpillar clade in $S_1,$ as in
pictured sets $(viii)$ and $(ix)$ of Fig.~\ref{flag_induct}. When
the leaf $i$ is fixed, the face obeys the inequality:
$$(2^{n-3}-2^{m-1})(x_{ai}-x_{bi})\le(2^{n-3}-x_{ab})(2^{i-1}-2^{i+k-5}).$$
This inequality is an equality on the face and strict on the trees
of the previous face excluded from the current face, by the same
arguments as above.

Finally we exclude all the trees displaying the split except for
those with the cherry $\{a,b\},$ as shown by the pictured set $(x)$
in Fig.~\ref{flag_induct}. This completes the proof by induction, as
explained above. \qed \end{proof}

\section{Future work}
  We have shown that (for $n\le 11$) the
splitohedron contains among its vertices all the possible
phylogenetic trees. Therefore if the BME linear program is optimized
in the splitohedron at a valid tree vertex for $n\le 11$, it is also
optimized in the BME polytope.

More importantly, however, the binary phylogenetic trees for any $n$
all lie on the boundary of several facets of the splitohedron which
are also facets of the BME polytope. Our continuing research program
involves writing code that uses various linear programming methods
in sequence, with a branch-and-bound scheme, to find the BME tree.

 Then by finding further facets we
will improve this theorem, hopefully to a version that holds for all
$n>11$.



\section{Acknowledgements}
We thank the editors and both referees for helpful comments. The
first author would like to thank the organizers and participants in
the Working group for geometric approaches to phylogenetic tree
reconstructions, at the NSF/CBMS Conference on Mathematical
Phylogeny held at Winthrop University in June-July 2014. Especially
helpful were conversations with Ruriko Yoshida, Terrell Hodge and
Matt Macauley. The first author would also like to thank the
American Mathematical Society and the Mathematical Sciences Program
of the National Security Agency for supporting this research through
grant H98230-14-0121.\footnote{This manuscript is submitted for
publication with the understanding that the United States Government
is authorized to reproduce and distribute reprints.} The first
author's specific position on the NSA is published in
\cite{freedom}. Suffice it to say here that he appreciates NSA
funding for open research and education, but encourages reformers of
the NSA who are working to ensure that protections of civil
liberties keep pace with intelligence capabilities.

\bibliography{phylogenetics}{}
\bibliographystyle{plain}

\end{document}